\newtheorem{theorem}{Theorem}[section]
\newtheorem*{thm}{Theorem}
\newtheorem{lemma}[theorem]{Lemma}
\newtheorem{proposition}[theorem]{Proposition}
\newtheorem{corollary}[theorem]{Corollary}
\theoremstyle{definition}
\newtheorem{definition}[theorem]{Definition}
\theoremstyle{remark}
\newtheorem{remark}[theorem]{Remark}
\numberwithin{equation}{section}
\DeclareMathOperator{\alg}{alg}
\DeclareMathOperator{\Gal}{Gal}
\DeclareMathOperator{\Log}{Log}
\newcommand{\mfrak}[1]{\mathfrak{#1}}
\newcommand{\mcal}[1]{\mathcal{#1}}
\newcommand{\mbb}[1]{\mathbb{#1}}
\newcommand{\Cl}{\mathrm{Cl}}
\newcommand{\ZP}{\mbb{Z}_p}
\begin{document}

\title[$C^1(\ZP)^*$ and $\ZP$-extensions]{On a construction of $C^1(\mbb{Z}_p)$ functionals from $\mbb{Z}_p$-extensions of algebraic number fields}


\author{Timothy All}
\email{timothy.all@rose-hulman.edu}
\address{5500 Wabash Ave, Terre Haute, IN 47803}


\author{Bradley Waller}
\email{waller@math.osu.edu}
\address{231 W 18th Ave, Columbus, OH 43210}


\subjclass[2010]{}{}

\keywords{distributions, $L$-functions, real abelian number field, class group}

\date{}


\begin{abstract}
Let $k$ be any number field and $k_{\infty}/k$ any $\mbb{Z}_p$-extension. We construct a natural $\Lambda= \mbb{Z}_p\llbracket T-1 \rrbracket$-morphism from $\varprojlim k_n^{\times} \otimes_{\mbb{Z}} \mbb{Z}_p$ into a special subset of $C^1(\mbb{Z}_p)^*$, the collection of linear functionals on the set of continuously differentiable functions from $\mbb{Z}_p \to \mbb{C}_p$. We apply the results to the problem of interpolating Gauss sums attached to Dirichlet characters and the explicit annihilation of real ideal classes.
\end{abstract}

\maketitle

\section{Introduction}

Fix an odd prime $p$ and let $m$ be a positive integer co-prime to $p$. For an integer $n$, we let $\zeta_n=e^{2\pi i/n}$ so that $\zeta_n^d = \zeta_{n/d}$ for every $d \mid n$. Let $K_n = \mbb{Q}(\zeta_{mp^{n+1}})$, and let $G_n = \Gal(K_n/K_0)$.

We take a moment to review some classical theory from which this paper draws inspiration. Let $\theta_n \in \mbb{Q}[\Gal(K_n/\mbb{Q})]$ denote the classical Stickelberger element attached to the number field $K_n$. Recall that $\theta_n$, once properly made integral, annihilates the class group of $K_n$. Suppose $\varphi$ is a non-trivial even Dirichlet character of conductor $mp^{n+1}$ taking values in $K$, a finite extension of $\mbb{Q}_p$. The character $\varphi$ decomposes uniquely into a product of a tame character $\chi$ and a wild character $\psi$. Let $\theta_n(\chi) \in K[G_n]$ denote the $\chi$-part of $\theta_n$. In a celebrated work \cite{I}, Iwasawa showed that the sequence $(\theta_n(\chi)) \in \varprojlim K[G_n]$ (the projective limit taken with respect to the natural maps) is associated in a natural way to a function $F_{\chi}(T) \in \mfrak{o} \llbracket T-1 \rrbracket$ where $\mfrak{o}$ is the integer ring of $K$. What's more, this function is essentially the $p$-adic $L$-function of Leopoldt and Kubota. In fact, we have
\[ L_p(s,\chi\psi) = F_{\chi} \left( \zeta_{\psi} (1+p)^s \right)\]
where $\zeta_{\psi} = \overline{\psi}(1+p)$.

Unfortunately, if one restricts the action of $\theta_n$ to $K_n^+$, the maximal real subfield of $K_n$, it reduces to a multiple of the norm. With $\log_p$ denoting the Iwasawa logarithm, non-trivial explicit elements such as
\[ \vartheta_n = \sum_{\sigma \in G(k_n/\mbb{Q})} \log_p(1-\zeta_{mp^{n+1}}^\sigma) \sigma^{-1} \] were shown in \cite{A}, once properly made integral, to annihilate $\Cl(K_n^+) \otimes_{\mbb{Z}} \mcal{O}$ where $\mcal{O}$ is the ring of integers of the topological closure of $K_n^+ \hookrightarrow \mbb{Q}_p^{\alg}$. This article was born out of considering what analytic functions were naturally associate to the sequences $(\vartheta_n(\chi)) \in \varprojlim \mbb{Q}_p^{\alg}[G_n]$ (or more generally, to elements in $\varprojlim K_n^{\times}$) in analogy with Iwasawa's construction of $p$-adic $L$-functions.

Towards that end, let $k$ be any number field, and let
\[ k=k_0 \subset k_1 \subset k_2 \subset \cdots \subset \bigcup_{n=0}^{\infty} k_n = k_{\infty} \]
denote a $\mbb{Z}_p$-extension of $k$. So $\Gamma:=\Gal(k_{\infty}/k)$ is topologically isomorphic to $\mbb{Z}_p$, and $\Gamma_n = \Gal(k_n/k_0) \simeq \Gamma/\Gamma^{p^n}$. Let $\gamma_0$ be a fixed topological generator for $\Gamma$ and associate $\Gamma$ with $\mbb{Z}_p$ via the isomorphism $\gamma_0^a \mapsto a$. Let $\mfrak{p}$ be a prime of $k$ such that the inertia subgroup of $\mfrak{p}$ is $\Gal(k_{\infty}/k_i)$ for some $i$. This necessitates $\mfrak{p} \mid p$. The valuation $v_{\mfrak{p}}$ extends to $k_{\infty}$, and we let $\mcal{K}_n$ denote the completion of $k_n$ with respect to this valuation.

Let $\mbb{C}_p$ denote the topological closure of $\mbb{Q}_p^{\alg}$. Suppose $\mu = \{ \mu_n: \Gamma_n \to \mbb{C}_p\}_{n=0}^{\infty}$ is a collection of maps with the following property:
\[ \mu_n(x) = \sum_{y \mapsto x} \mu_{n+1}(x) \]
where $\Gamma_{n+1} \to \Gamma_n$ naturally. We call such a collection of maps a \textit{distribution} on $\Gamma$. We denote the ring (under convolution) of all distributions on $\Gamma$ by $\mcal{D}(\Gamma)$, and we write $\mu(a+p^n\mbb{Z}_p)$ in place of the more cumbersome $\mu_n(\gamma_0^a \bmod{\Gamma^{p^n}})$.

Note that the $\Gamma$-map $\mcal{D}(\Gamma) \to \varprojlim \mbb{C}_p[\Gamma_n]$ defined by
\[ \mu \mapsto \left( \sum_{a=0}^{p^n-1} \mu(a + p^n\mbb{Z}_p) \gamma_0^{-a} \right) \]
prescribes an isomorphism of rings. So the elements $(\vartheta_n(\chi)) \in \varprojlim \mbb{Q}_p^{\alg}[G_n]$ naturally give rise to distributions in $\mcal{D}(\Gamma)$ through the inverse of this map. On the other hand, $(\vartheta_n(\chi)) \in \varprojlim \mbb{Q}_p^{\alg}[G_n]$ as a byproduct of $(1-\zeta_{mp^{n+1}}) \in \varprojlim K_n^{\times}$, the projective limit taken with respect to the norm maps. Taken together, this uncovers a very natural source for distributions: define $\varprojlim k_n^{\times} \to \mcal{D}(\Gamma)$ by $(\ell_n) \mapsto \lambda$ where
\[ \lambda(a + p^n \mbb{Z}_p) = -\log_p \big( \ell_n^{\gamma_0^a} \big) \in \mcal{K}_n. \footnote{Our choice of sign reflects the formula for $L_p(1,\chi)$.}\]
Let $\mcal{M}(\Gamma)$ denote the collection of $\mbb{Z}_p$-valued distributions, and let $\mcal{K}(\Gamma)$ denote the $\mcal{M}(\Gamma)$-module generated by the image of the map described above.

What does one do with distributions anyway? For $\mu \in \mcal{D}(\Gamma)$, we say that a function $f: \mbb{Z}_p \to \mbb{C}_p$ is \emph{$\mu$-integrable} to mean that the limit
\[ \int_{\mbb{Z}_p} f(x)\ d\mu :=\lim_{n \to \infty} \sum_{a=0}^{p^n-1} f(a) \mu(a+p^n \mbb{Z}_p) \]
exists. We call this limit the \emph{Volkenborn integral of $f$ with respect to $\mu$}. The distinguishing feature of Volkenborn integration is the uniform choice of representatives from the classes $a+ p^n \mbb{Z}_p$ where $0 \leq a < p^n-1$ (namely, the choosing of $a$ itself).

Thus distributions give rise to linear functionals on appropriate function spaces. For example, it's well known that for every $\mu \in \mcal{M}(\Gamma)$, the collection $C(\mbb{Z}_p)$ of continuous functions on $\mbb{Z}_p$ are $\mu$-integrable. So every $\mu \in \mcal{M}(\Gamma)$ determines a linear functional on $C(\mbb{Z}_p)$ where
\[ \mu(f) := \int_{\mbb{Z}_p} f\ \mathrm{d}\mu.\]
What's more, the Fourier transform $\mcal{M}(\Gamma) \to \Lambda:=\mbb{Z}_p\llbracket T - 1 \rrbracket$ given by $\mu \mapsto  \widehat{\mu}(T)$ where
\[ \widehat{\mu}(T)=\mu(T^x) = \int_{\mbb{Z}_p} T^x\ \mathrm{d}\mu(x) = \sum_{m=0}^{\infty} \left( \int_{\mbb{Z}_p} \binom{x}{m}\ \mathrm{d}\mu(x) \right) (T-1)^m \]
is a well-defined isomorphism. All told we have natural isomorphisms
\begin{center}
\begin{tikzpicture}[scale=.75]
\node (M) at (-1,0) {$\mcal{M}(\Gamma)$};
\node (L) at (1,0) {$\Lambda$};
\node (GR) at (1,-2) {$\mbb{Z}_p\llbracket \Gamma \rrbracket$};

\node (m) at (6,0) {$\mu$};
\node (Fm) at (8,0) {$F_{\mu}$};
\node (gr) at (8,-2) {$\left( \sum \mu(a+p^n\mbb{Z}_p) \gamma_0^{-a} \right)$};

\node (text) at (3.5,-1) {given by};

\draw [->,-latex] (M) to node {} (L);
\draw [->,-latex] (L) to node {} (GR);
\draw [->,-latex] (M) to node {} (GR);

\draw [|->] (m) to node {} (Fm);
\draw [|->] (Fm) to node {} (gr);
\draw [|->] (m) to node {} (gr);
\end{tikzpicture}
\end{center}
If $M$ is a module over $\mcal{M}(\Gamma)$ or $\mbb{Z}_p\llbracket \Gamma \rrbracket$ naturally, then we consider it a module over $\Lambda$ (or any of the others for that matter) through the above diagram. In particular, extend the Iwasawa logarithm $\log_p$ to a function $\Log_p: k_n^{\times} \otimes_{\mbb{Z}} \mbb{Z}_p \to \mbb{C}_p$ in the obvious way: $\Log_p( \ell \otimes x) = x \log_p(\ell)$. Then it's straightforward to verify that the map $\varprojlim k_n^{\times} \otimes_{\mbb{Z}} \mbb{Z}_p \to \mcal{K}(\Gamma)$ defined by
\[ \mfrak{l}_n \mapsto \left( \mfrak{L} : a+ p^n\ZP \mapsto \Log_p\big( \mfrak{l}_n^{\gamma_0^a} \big) \right) \]
is a $\Lambda$-morphism.

Our main result is that continuously differentiable functions are $\lambda$-integrable for every $\lambda \in \mcal{K}(\Gamma)$, in other words
\begin{thm}
Let $\lambda \in \mcal{K}(\Gamma)$. Then $\lambda$ is a linear functional on $C^1(\mbb{Z}_p)$ where
\[ \lambda(f) = \int_{\mbb{Z}_p} f\ \mathrm{d}\lambda.\]
\end{thm}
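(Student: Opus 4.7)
The plan is to verify the Cauchy criterion $|S_{n+1}(f)-S_n(f)|_p \to 0$ for the Riemann-type sums $S_n(f) := \sum_{a=0}^{p^n-1} f(a)\lambda(a+p^n\ZP)$. Since $\mcal{K}(\Gamma)$ is the $\mcal{M}(\Gamma)$-module generated by the image $\mfrak{L}$ of the $\Lambda$-morphism $\varprojlim k_n^\times \otimes_\mbb{Z}\ZP \to \mcal{D}(\Gamma)$, and since convolution with $\mu\in \mcal{M}(\Gamma)$ preserves $C^1$-integrability (bounded measures already integrate $C(\ZP)$, and Mahler expansions pass through the convolution), it suffices to treat a generator $\mfrak{L}(a+p^n\ZP) = \Log_p(\mfrak{l}_n^{\gamma_0^a})$ arising from a fixed norm-coherent $(\mfrak{l}_n)\in \varprojlim k_n^\times\otimes\ZP$. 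The first key ingredient is the uniform growth bound $|\mfrak{L}(a+p^n\ZP)|_p \le Cp^n$: writing $\mfrak{l}_n = \pi_n^{v_n}u_n$ in $\mcal{K}_n$ with $\pi_n$ a uniformizer and $u_n$ a unit, the Iwasawa logarithm is bounded by $1$ on units, while the relation $\pi_n^{e_n}=p\cdot(\text{unit})$ yields $|\Log_p(\pi_n)|_p\le |e_n|_p^{-1}\le Cp^n$, using that the hypothesis on inertia forces $e_n\mid e_i p^{n-i}$ for $n\ge i$.

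Using the strict $C^1$ property of $f$, one gets a uniform expansion $f(a+jp^n)-f(a) = jp^nf'(a)+r_{n,a,j}$ with $|r_{n,a,j}|_p\le\epsilon_n p^{-n}$ and $\epsilon_n\to 0$. Substituting into
\[
S_{n+1}(f)-S_n(f) = \sum_{a=0}^{p^n-1}\sum_{j=0}^{p-1}\bigl[f(a+jp^n)-f(a)\bigr]\mfrak{L}(a+jp^n+p^{n+1}\ZP)
\]
and isolating the linear-in-$j$ part yields a remainder contribution bounded by $\epsilon_n p^{-n}\cdot Cp^{n+1}=O(\epsilon_n)\to 0$, together with a principal term
\[
\text{Main}_n = p^n\sum_{a=0}^{p^n-1} f'(a)\,\Psi_n(a),\qquad \Psi_n(a) = \sum_{j=0}^{p-1} j\,\mfrak{L}(a+jp^n+p^{n+1}\ZP) = \Log_p\bigl(\mfrak{l}_{n+1}^{\gamma_0^a\tau_n}\bigr),
\]
where $\tau_n := \sum_{j=0}^{p-1} j\,\gamma_0^{jp^n}\in\mbb{Z}[\Gamma]$.

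The principal difficulty is showing $|\text{Main}_n|_p\to 0$: the naive estimate $|\Psi_n(a)|_p\le Cp^{n+1}$ only gives $|\text{Main}_n|_p=O(1)$, which is insufficient. The plan to sharpen this is to exploit the algebraic identity
\[
(\gamma_0^{p^n}-1)\tau_n \equiv p-\nu_n \pmod{\Gamma^{p^{n+1}}},\qquad \nu_n := \sum_{j=0}^{p-1}\gamma_0^{jp^n},
\]
(verified directly in $\mbb{Z}[\Gamma/\Gamma^{p^{n+1}}]$) together with the norm-compatibility $\mfrak{l}_{n+1}^{\nu_n}=\mfrak{l}_n$. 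Taking $\Log_p$ yields the cocycle-type relation $(\gamma_0^{p^n}-1)\Log_p(\mfrak{l}_{n+1}^{\tau_n}) = p\,\Log_p(\mfrak{l}_{n+1})-\Log_p(\mfrak{l}_n)$. Combined with careful control of how the operator $\gamma_0^{p^n}-1$ contracts elements of $\mcal{K}_{n+1}$ (which comes from the higher ramification filtration on $\mcal{K}_{n+1}/\mcal{K}_n$), this upgrades the bound to $|\Psi_n(a)|_p = o(p^n)$, thereby closing the Cauchy argument and establishing the theorem.
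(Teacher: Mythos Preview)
Your direct Cauchy argument has a genuine gap at the last step: the claimed bound $|\Psi_n(a)|_p=o(p^n)$ is false in general. Rewriting
\[
\Psi_n(a)=\frac{1}{p}\sum_{j=0}^{p-1}j\bigl[p\,\mfrak{L}(a+jp^n+p^{n+1}\ZP)-\mfrak{L}(a+p^n\ZP)\bigr]+\frac{p-1}{2}\,\mfrak{L}(a+p^n\ZP),
\]
even after the bracketed terms are shown to be $O(1)$ there remains the tail $\tfrac{p-1}{2}\mfrak{L}(a+p^n\ZP)$. Since $p^n\mfrak{L}(a+p^n\ZP)$ converges to a continuous function $f_{\mfrak{L}}(a)$ that is typically nonzero, this tail has $p$-adic size exactly of order $p^n$. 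Your cocycle relation $(\gamma_0^{p^n}-1)\Psi_n=p\,\mfrak{L}(\cdot+p^{n+1}\ZP)-\mfrak{L}(\cdot+p^n\ZP)$ cannot repair this: $\gamma_0^{p^n}-1$ annihilates $\mcal{K}_n$, so it says nothing about the $\mcal{K}_n$-component of $\Psi_n$, which is precisely the large piece. What your identity together with the ramification input actually yields is the \emph{Volkenborn} condition $|p\,\mfrak{L}(a+p^{n+1}\ZP)-\mfrak{L}(a+p^n\ZP)|_p=O(1)$, not pointwise decay of $\Psi_n$.

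With only the Volkenborn bound, your $\text{Main}_n$ reduces to $\tfrac{(p-1)p^n}{2}S_n(f')+o(1)$, and controlling $S_n(f')$ for $f'\in C^0(\ZP)$ is the same integrability question one derivative down---a circularity your scheme does not break. The paper avoids this by a different route: it first proves the Volkenborn condition for generators of $\mcal{K}(\Gamma)$ (via Hilbert~90 and the ramification filtration, essentially your ingredients), then establishes $C^1$-integrability for \emph{all} Volkenborn distributions by treating polynomials first---where induction on degree handles exactly the analogue of your Main term, $b_n=\tfrac{p-1}{2}mp^nS_{n,m-1}$---and finally extending to $C^1$ via the Mahler criterion $m|a_m|_p\to 0$. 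A smaller issue: the Iwasawa logarithm is \emph{not} bounded by $1$ on units of a wildly ramified local field, so your derivation of $|\mfrak{L}(a+p^n\ZP)|_p\le Cp^n$ is also incomplete; in the paper this growth bound is a consequence of the Volkenborn condition rather than an input to it.
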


In particular, the Fourier transform $\widehat{\lambda}(T) \in \mbb{C}_p\llbracket T-1 \rrbracket$ exists and has radius of convergence $ \geq 1$. The analytic functions $\widehat{\lambda}(T)$ are like $L$-functions for the underlying norm coherent sequence. For example, consider the following special case. Suppose $k/\mbb{Q}$ is an abelian number field whose conductor is not divisible by $p^2$, and let $F$ be any abelian number field linearly disjoint from $k$ and of conductor co-prime to $p$. If $k_{\infty}/k$ is the cyclotomic $\mbb{Z}_p$-extension of $k$, then the tower of number fields $Fk_n$ forms the cyclotomic $\mbb{Z}_p$-extension of $Fk$, and we consider $\Gamma_n$ (resp. $\Delta:=\Gal(k_0/\mbb{Q})$) as being contained in (resp. a quotient of) the set of automorphisms of $\Gal(Fk_n/\mbb{Q})$ fixing $F$. For a character $\chi$ of $\Delta$, define $\varprojlim (Fk_n)^{\times} \to \mcal{D}(\Gamma)$ by $(\ell_n) \mapsto \lambda_{\chi}$ where
\[ \lambda_{\chi}(a+p^n\mbb{Z}_p) = -\sum_{\delta \in \Delta} \log_p(\ell_n^{\gamma_0^a \delta}) \overline{\chi}(\delta).\]
Let $\mcal{K}_{\chi}^F(\Gamma)$ denote the $\Lambda$-module generated by the image of the map described above. The functions $\widehat{\lambda}_{\chi}(T)$ (or $\widehat{\lambda}(T)$, for that matter) interpolate values reminiscent of those found in the formula for $L_p(1,\varphi)$, the $p$-adic $L$-function of Leopoldt, Kubota, Iwasawa, et al. As a straightforward consequence of the above theorem, we have
\begin{thm}
Let $\lambda_{\chi} \in \mcal{K}_{\chi}^F(\Gamma)$. Then $\lambda_{\chi}$ is a linear functional on $C^1(\mbb{Z}_p)$ where
\[ \lambda_{\chi}(f) = \int_{\mbb{Z}_p} f\ \mathrm{d}\lambda_{\chi}.\]
If $\psi$ is a character of $\Gamma_n$ with $\zeta_{\psi} = \overline{\psi}(\gamma_0)$ and $(\ell_n) \mapsto \lambda_{\chi}$, then
\[ \widehat{\lambda}_{\chi}(\zeta_{\psi}) = -\sum_{\sigma} \log_p\big( \ell_n^{\sigma} \big) \overline{\varphi}(\sigma) \]
where the sum runs over all $\sigma \in \Gamma_n \times \Delta = \Gal(k_n/\mbb{Q})$ and $\varphi = \chi \psi$.
\end{thm}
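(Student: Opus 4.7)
The plan is to reduce the $C^1$-integrability statement to the first theorem, then verify the interpolation formula by a direct computation using the Mahler expansion and the distribution relation for $\lambda_\chi$.

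For the integrability assertion, note that for each $\delta \in \Delta$ the twisted sequence $(\ell_n^\delta)$ remains norm-coherent in the tower $Fk_\infty/Fk$, because $\delta$ commutes with the norm maps. Let $\lambda^{(\delta)}$ denote the distribution it defines via the same construction as before. Applying the first theorem with base field $Fk$ and its cyclotomic $\mbb{Z}_p$-extension (whose Galois group is canonically identified with $\Gamma$), each $\lambda^{(\delta)}$ is a linear functional on $C^1(\mbb{Z}_p)$. Directly from the definition,
\[
\lambda_\chi \;=\; \sum_{\delta \in \Delta}\overline{\chi}(\delta)\,\lambda^{(\delta)},
\]
so $\lambda_\chi$ is a finite $\mbb{C}_p$-linear combination of such functionals, hence itself a linear functional on $C^1(\mbb{Z}_p)$.

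For the interpolation formula, I would fix $n$ large enough that $\psi$ factors through $\Gamma_n$, so that $\zeta_\psi$ is a root of unity of $p$-power order dividing $p^n$, and in particular $|\zeta_\psi - 1|_p < 1$. The function $x \mapsto \zeta_\psi^x$ is then locally constant on $\mbb{Z}_p$ at level $p^n$ and a fortiori lies in $C^1(\mbb{Z}_p)$; its Mahler expansion
\[
\zeta_\psi^x \;=\; \sum_{m \geq 0}\binom{x}{m}(\zeta_\psi - 1)^m
\]
converges in the $C^1$-norm by the standard Amice-type criterion, since the $C^1$-norm of $\binom{x}{m}$ grows only polynomially in $m$ while $|\zeta_\psi - 1|_p^m$ decays geometrically. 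The $C^1$-continuity of $\lambda_\chi$ provided by the first theorem then allows the integral to commute with the sum, yielding
\[
\widehat{\lambda}_\chi(\zeta_\psi) \;=\; \sum_{m \geq 0}(\zeta_\psi - 1)^m\int_{\mbb{Z}_p}\binom{x}{m}\,d\lambda_\chi(x) \;=\; \int_{\mbb{Z}_p}\zeta_\psi^x\,d\lambda_\chi(x).
\]

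By local constancy of the integrand at level $p^n$, the defining Volkenborn limit stabilizes via the distribution relation for $\lambda_\chi$ to the finite sum $\sum_{b=0}^{p^n-1}\zeta_\psi^b\,\lambda_\chi(b+p^n\mbb{Z}_p)$. Substituting the definition of $\lambda_\chi$, using $\zeta_\psi^b = \overline{\psi}(\gamma_0^b)$, and invoking the factorization $\overline{\varphi}(\gamma_0^b\delta) = \overline{\psi}(\gamma_0^b)\,\overline{\chi}(\delta)$ on $\Gamma_n \times \Delta = \Gal(k_n/\mbb{Q})$, the double sum over $(b,\delta)$ reindexes as a single sum over $\sigma \in \Gal(k_n/\mbb{Q})$ to produce the claimed formula. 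I expect the main technical hurdle to be justifying the $C^1$-norm convergence of the Mahler series and the attendant interchange with $\lambda_\chi$; once that is handled, everything else is bookkeeping with characters.
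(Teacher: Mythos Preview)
Your proposal is correct and matches the paper's approach: the paper states this theorem as a straightforward consequence of the first and carries out exactly your computation in the proof of Theorem~4.1, writing $\widehat{\lambda_\chi}(\zeta_\psi) = \int_{\mbb{Z}_p} \zeta_\psi^x\,\mathrm{d}\lambda_\chi = \sum_{a=0}^{p^n-1}\overline{\psi}(\gamma_0^a)\,\lambda_\chi(a+p^n\mbb{Z}_p)$ directly from local constancy and the distribution relation, after the same decomposition $\lambda_\chi=\sum_{\delta}\overline{\chi}(\delta)\lambda^{(\delta)}$ into elements of $\mcal{K}(\Gamma)$. The Mahler-series interchange you flag as the main hurdle is already absorbed in the proof of Theorem~2.4 (equation~(2.5) there), so the paper does not pause over it; otherwise the arguments coincide.
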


We apply the above results to the problem of interpolating Gauss sums attached to a Dirichlet character. Particularly interesting is the case when the tamely ramified character $\chi$ is of conductor $p$. In this case, the Gauss sums
\[ \tau(\chi \psi) = \sum_{a=1}^{p^{n+1}} \chi\psi(a) \zeta_{p^{n+1}}^a \]
are essentially interpolated from the Fourier transform of $\lambda_{\chi} \in \mcal{K}_{\chi}^{\mbb{Q}(\zeta_{p-1})}(\Gamma)$ where the underlying norm coherent sequence generates the projective limit of principal units of $\mbb{Q}_p(\zeta_{p^{n+1}})$. Since it's peripheral to the interpolation problem, we also show how to use the special values of the functions $\widehat{\lambda_{\chi}}(T)$ to construct an explicit sequence $(\vartheta_n) \in \mbb{Z}_p[\Gamma_n]$ such that $\vartheta_n$ annihilates the $\chi$-part of the Sylow $p$-subgroup of $\Cl \big( \mbb{Q}(\zeta_{p^{n+1}}^+) \big)$ for every $n \geq 0$.

\section{Volkenborn Distributions}

In this section we give an overview of the theory of \emph{Volkenborn distributions} of which  distributions in $\mcal{K}(\Gamma)$ are a special case. C. Barbacioru \cite{B} developed the Volkenborn distribution in his doctoral dissertation. This section is largely an overview of the tools from \cite{B} that will be needed in the sequel.

\begin{definition}\label{bd1}
A distribution $\mu$, on $\ZP$, is said to be Volkenborn if there exists $B(\mu)\in\mathbb{R}_{\geq 0}$ such that
\begin{equation*}
|p\mu(a+p^{n+1}\ZP)-\mu(a+p^n\ZP)|_p\leq B(\mu)
\end{equation*}
for all $a\in\ZP$ and $n\in\mathbb{Z}_{\geq 0}$
\end{definition}
Note that all $p$-adically bounded distributions are necessarily Volkenborn, but a distribution need not be bounded to be Volkenborn. In fact, the prototype Volkenborn distribution is the Haar distribution: $a+p^n \mbb{Z}_p \mapsto \frac{1}{p^n}$.

\begin{lemma}\label{bl1}
Let $\mu$ be a Volkenborn distribution and let $f_n: \mbb{Z}_p \to \mbb{C}_p$ be defined by $f_n: x \mapsto p^n \mu(x + p^n\ZP)$. Then there exists a continuous and bounded function $f: \ZP \to \mbb{C}_p$ such that $f_n \rightrightarrows f$ uniformly on $\ZP$.
\end{lemma}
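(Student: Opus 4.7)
My plan is to show that $(f_n)$ is a uniformly Cauchy sequence in the sup-norm on $\ZP$ by reading the Volkenborn inequality of Definition \ref{bd1} as a direct bound on the consecutive differences $f_{n+1}-f_n$. Since $\mbb{C}_p$ is complete, a uniform limit will then exist automatically, and continuity and boundedness of the limit should come cheaply from properties of the approximants $f_n$ themselves.

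For the key step I would expand
\[ f_{n+1}(x) - f_n(x) = p^n\bigl[p\,\mu(x + p^{n+1}\ZP) - \mu(x + p^n\ZP)\bigr] \]
and invoke the defining estimate $|p\,\mu(a+p^{n+1}\ZP) - \mu(a+p^n\ZP)|_p \leq B(\mu)$ to conclude that $|f_{n+1}(x) - f_n(x)|_p \leq p^{-n} B(\mu)$ uniformly in $x \in \ZP$. This geometric decay of the differences is exactly what is needed to make $(f_n)$ a uniformly Cauchy sequence in the sup-norm on $\ZP$, delivering a uniform limit $f : \ZP \to \mbb{C}_p$.

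To finish, I would observe that each $f_n$ is locally constant on $\ZP$ because $\mu(x + p^n\ZP)$ depends on $x$ only through its residue modulo $p^n$; in particular each $f_n$ is continuous on the compact set $\ZP$, and the uniform limit of continuous functions is continuous, giving $f \in C(\ZP)$. For boundedness I would exploit the fact that $f_0(x) = \mu(\ZP)$ is a \emph{constant} function, and combine it with the telescoping estimate on the differences to read off
\[ |f(x)|_p \leq \max\Bigl\{|\mu(\ZP)|_p,\; \sup_{n \geq 0} |f_{n+1}(x) - f_n(x)|_p\Bigr\} \leq \max\{|\mu(\ZP)|_p,\, B(\mu)\}, \]
which is independent of $x$.

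I do not anticipate a real obstacle: the Volkenborn condition appears to have been engineered precisely to make this argument work, so the proof is essentially the identification of the bound in Definition \ref{bd1} with the Cauchy estimate for uniform convergence, plus the routine observations that $f_n$ is locally constant and $f_0$ is constant.
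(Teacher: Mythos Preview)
Your proposal is correct and follows essentially the same argument as the paper: both compute $f_{n+1}(x)-f_n(x)=p^n\bigl[p\,\mu(x+p^{n+1}\ZP)-\mu(x+p^n\ZP)\bigr]$, apply the Volkenborn bound to obtain $|f_{n+1}(x)-f_n(x)|_p\le p^{-n}B(\mu)$, and conclude uniform convergence, continuity, and the bound $|f(x)|_p\le\max\{|\mu(\ZP)|_p,B(\mu)\}$ via telescoping. The only cosmetic difference is that the paper writes out the telescoping sum for $p^n\mu(a+p^n\ZP)$ explicitly before passing to the limit, whereas you invoke the uniform Cauchy criterion directly.
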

\begin{proof}
Note that
\[ p^n\mu(a+p^n\ZP) = \left( \sum_{j=1}^n p^{j-1} ( p\mu(a+p^j\ZP)-\mu(a+p^{j-1}\ZP)) \right)+\mu(\ZP).\]
The terms of the sum go to zero as $j \to \infty$ since $\mu$ is Volkenborn. It follows that the sum converges. Define $f: \mbb{Z}_p \to \mbb{C}_p$ by $x \mapsto \lim p^n \mu(x + p^n\ZP)$. Note the above shows that $f$ is bounded, in fact, $|f(x)| \leq \max \{ B(\mu), \mu(\ZP)\}$ for all $x \in \ZP$.

Now, let $x \in \mbb{Z}_p$ be arbitrary. Let $m>n$ be sufficiently large so that
\begin{align*}
\left| f(x) - f_n(x) \right|_p &\leq \max \left\{ \{ \left| f_{j+1}(x) - f_j(x)\right|_p\}_{j=n}^{m-1} \cup \{ \left| f(x) - f_m(x) \right|_p\} \right\} \\
&\leq \max \{ \left| f_{j+1}(x) - f_j(x)\right|_p\}_{j=n}^{m-1} \\
&\leq \frac{B(\mu)}{p^n}.
\end{align*}
The above bound does not depend on $x$, so $f_n \rightrightarrows f$. The function $f$ is continuous since it is a uniform limit of continuous functions on a compact set.
\end{proof}

For a Volkenborn distribution $\mu$, we want to show that all $C^1$ functions are $\mu$-integrable.  The strategy will be to first show that polynomials are $\mu$-integrable. This, in conjunction with properties of Mahler series of $C^1$ functions, will give us the $\mu$-integrability of $C^1$ functions.

\begin{proposition}\label{bp1}
Let $\mu$ be a Volkenborn distribution and $P$ be a polynomial.  Then $P$ is $\mu$-integrable.
\end{proposition}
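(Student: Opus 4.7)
My plan is to reduce to a Cauchy estimate for partial sums attached to the Mahler basis, then induct on degree. Since every polynomial $P$ expands as a finite sum $P = \sum_m c_m \binom{x}{m}$, linearity of the defining limit reduces the statement to showing $\binom{x}{m}$ is $\mu$-integrable for every $m \geq 0$. Fix $m$ and let
\[ I_n^{(m)} := \sum_{a=0}^{p^n-1} \binom{a}{m}\, \mu(a+p^n\ZP). \]
I would prove by induction on $m$ that $\bigl(I_n^{(m)}\bigr)_{n \geq 0}$ is Cauchy in $\mbb{C}_p$. The base case $m = 0$ is immediate: $I_n^{(0)} = \mu(\ZP)$ is independent of $n$.

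For the inductive step, refine each $a + p^n\ZP$ into the subclasses $a + jp^n + p^{n+1}\ZP$ for $j = 0, \ldots, p-1$, and apply the distribution relation to get
\[ I_{n+1}^{(m)} - I_n^{(m)} = \sum_{a=0}^{p^n-1} \sum_{j=0}^{p-1} \left(\binom{a+jp^n}{m} - \binom{a}{m}\right) \mu(a+jp^n+p^{n+1}\ZP). \]
Expand via Vandermonde, $\binom{a+jp^n}{m} - \binom{a}{m} = \sum_{i=1}^m \binom{a}{m-i}\binom{jp^n}{i}$, and invoke the Volkenborn condition to write $\mu(a + jp^n + p^{n+1}\ZP) = \tfrac{1}{p}\mu(a+p^n\ZP) + \eta_{a,j}$ with $|\eta_{a,j}|_p \leq pB(\mu)$. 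Since $|\binom{jp^n}{i}|_p = p^{-n+v_p(i)} \leq C_m p^{-n}$ uniformly in $1 \leq i \leq m$ for $n$ sufficiently large, the contribution of the $\eta$-terms is ultrametrically bounded by $(C_m p^{-n})(pB(\mu)) = O(p^{1-n})$ and vanishes.

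For the remaining ``main'' contribution $\frac{1}{p}\sum_a \mu(a+p^n\ZP)\,S_a$ with $S_a = \sum_{i=1}^m \binom{a}{m-i}\sum_j \binom{jp^n}{i}$, expand $\binom{jp^n}{i}$ as a polynomial in $jp^n$ using signed Stirling numbers of the first kind and sum over $j$. The crucial observation is that $\sum_{j=0}^{p-1} j = p(p-1)/2$ yields an extra factor of $p$: for $n$ large,
\[ \sum_{j=0}^{p-1} \binom{jp^n}{i} = \tfrac{(-1)^{i-1}(p-1)}{2i}\, p^{n+1} + O(p^{2n-1}). \]
This rewrites the main contribution as
\[ p^n \cdot \tfrac{p-1}{2} \sum_{i=1}^m \tfrac{(-1)^{i-1}}{i}\, I_n^{(m-i)} \; + \; O(p^{2-n}). \]
By the inductive hypothesis, each $I_n^{(m-i)}$ (for $1 \leq i \leq m$) is bounded in $n$, so the $p^n$ prefactor forces the main contribution to zero. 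Combining the two estimates, $I_{n+1}^{(m)} - I_n^{(m)} \to 0$ and the induction closes.

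The principal obstacle is that the naive ultrametric bound gives only $|I_{n+1}^{(m)} - I_n^{(m)}|_p \leq O(1)$: by Lemma \ref{bl1}, $|\mu(a+p^n\ZP)|_p$ may grow like $p^n$, while $|\binom{a+jp^n}{m} - \binom{a}{m}|_p$ decays only like $p^{-n}$. Recovering convergence requires isolating the additional factor of $p$ coming from $\sum_{j=0}^{p-1} j \equiv 0 \pmod{p}$ and rewriting the result in terms of lower-order Cauchy partial sums $I_n^{(k)}$ with $k < m$, which is what makes induction on the Mahler degree the natural framework.
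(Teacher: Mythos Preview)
Your argument is correct and shares the paper's architecture: induct on the degree, show the partial sums are Cauchy by controlling $I_{n+1}^{(m)}-I_n^{(m)}$, apply the Volkenborn correction $\mu(a+jp^n+p^{n+1}\ZP)=\tfrac{1}{p}\mu(a+p^n\ZP)+\eta_{a,j}$, exploit $\sum_{j=0}^{p-1} j = \tfrac{p(p-1)}{2}$ to manufacture one extra factor of $p$, and absorb what remains into lower-degree partial sums via the inductive hypothesis.

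The genuine difference is the choice of basis. The paper inducts on the monomials $x^m$ and expands $(j+kp^n)^m-j^m=\sum_{l\ge 1}\binom{m}{l}(kp^n)^l j^{m-l}$. There the $l\ge 2$ terms carry $|(kp^n)^l|_p\le p^{-2n}$, which already dominates the $p^{n+1}$ growth of $|\mu|_p$ from Lemma~\ref{bl1}; only the single $l=1$ term needs the Volkenborn split and the $\sum_k k$ trick, and it collapses immediately to $\tfrac{p-1}{2}\,m\,p^n S_{n,m-1}$. Your Mahler-basis route via Vandermonde gives $\binom{jp^n}{i}$ with merely $|\binom{jp^n}{i}|_p\le m\,p^{-n}$ (your displayed equality should be an inequality), so you must apply the Volkenborn split to every $i$ and then unfold $\binom{jp^n}{i}$ through Stirling numbers to locate the hidden factor of $p$; the error terms pick up $m$-dependent constants from $|i!|_p^{-1}$ that your $O(p^{2n-1})$ suppresses, but they are harmless. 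The upshot: the paper's monomial version is a little cleaner at this stage, while your version has the compensating virtue of working directly in the Mahler basis used in Proposition~\ref{bp2}.
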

\begin{proof}
Since limits are finitely additive, it suffices to show that $P(x)=x^m$ is $\mu$-integrable for all $m\in\mathbb{Z}_{\geq 0}$.  We proceed by induction. For $P(x) =1$, we have
\[ \int_{\mbb{Z}_p} \mathrm{d}\mu = \lim_{n \to \infty} \sum_{a=0}^{p^n-1} \mu(a+p^n\mbb{Z}_p) = \mu(\ZP).\]
Now, let $S_{n,m}:=\sum_{j=0}^{p^n-1}j^m\mu(j+p^n\ZP)$ for $m,n\in\mathbb{Z}_{\geq 0}$.  We wish to show that for a fixed $m\geq 1$ that $S_{n,m}$ is a Cauchy sequence. Note that

\begin{align*}
S_{n+1,m}-S_{n,m}&=\sum_{j=0}^{p^n-1}\sum_{k=0}^{p-1}((j+kp^n)^m-j^m)\mu(j+kp^n+p^{n+1}\ZP)\\
&=\sum_{j=0}^{p^n-1}\sum_{k=0}^{p-1}\sum_{l=1}^m\binom{m}{l}(kp^n)^l j^{m-l}\mu(j+kp^n+p^{n+1}\ZP).\hspace{2ex}\tag{$\star$}
\end{align*}
By Lemma \ref{bl1} we only need to show that the $l=1$ term from ($\star$) is small.  To do so, we will rewrite that term as follows:
\begin{equation*}
\sum_{j=0}^{p^n-1}\sum_{k=0}^{p-1}mkp^nj^{m-1}\mu(j+kp^n+p^{n+1}\ZP)=a_n+b_n
\end{equation*}
where
\begin{align*}
a_n &=\sum_{j=0}^{p^n-1}\sum_{k=0}^{p-1}mkp^nj^{m-1}(\mu(j+kp^n+p^{n+1}\ZP)-\frac{1}{p}\mu(j+p^n\ZP)), \\
b_n &=\sum_{j=0}^{p^n-1}\sum_{k=0}^{p-1}mkp^{n-1}j^{m-1}\mu(j+p^n\ZP).
\end{align*}
It remains to show that both $a_n$ and $b_n$ go to zero as $n\to\infty$. For $a_n$, we have
\begin{align*}
|a_n|_p&=\left|\sum_{j=0}^{p^n-1}\sum_{k=0}^{p-1}mkp^nj^{m-1}(\mu(j+kp^n+p^{n+1}\ZP)-\frac{1}{p}\mu(j+p^n\ZP))\right|_p\\
&\leq |mp^n(\mu(j+kp^n+p^{n+1}\ZP)-\frac{1}{p}\mu(j+p^n\ZP))|_p\\
&\leq p^{1-n}B(\mu).
\end{align*}
It follows that $a_n\to 0$ as $n\to\infty$. For $b_n$, we have
\[ b_n = \sum_{j=0}^{p^n-1}\sum_{k=0}^{p-1}mkp^{n-1}j^{m-1}\mu(j+p^n\ZP) = \frac{p-1}{2}mp^n S_{n,m-1}.\]
By the inductive hypothesis $\{S_{n,m-1}\}_{n=0}^\infty$ is a bounded sequence (since it is a convergent sequence).  It follows that $b_n\to 0$ as $n\to\infty$. This shows that $S_{n,m}$ is a Cauchy sequence, so $\lim_{n\to\infty} S_{n,m}$ converges.
\end{proof}

Since $C^1$ functions are determined by their Mahler series, it is important to know bounds on $\left|\int_{\ZP}\binom{x}{m}\mathrm{d}\mu(x)\right|$.  The next proposition gives such a bound.
\begin{proposition}\label{bp2}
Let $\mu$ be a Volkenborn distribution.  Then there exists $c\in\mathbb{R}_{\geq 0}$ such that for all $m\in\mathbb{Z}_{\geq 0}$ we have
\begin{equation*}
\left|\int_{\ZP}\binom{x}{m}\mathrm{d}\mu(x)\right|_p\leq cm
\end{equation*}
\end{proposition}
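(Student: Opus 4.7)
The plan is to extend the induction strategy of Proposition \ref{bp1}, replacing the monomial expansion with a Vandermonde-type expansion for binomial coefficients. Setting $S_{n,m} := \sum_{a=0}^{p^n-1}\binom{a}{m}\mu(a+p^n\ZP)$, the observation $\binom{0}{m}=0$ for $m\geq 1$ gives $S_{0,m}=0$, so the ultrametric inequality yields $|S_{n,m}|_p \leq \max_{0\leq k<n}|S_{k+1,m}-S_{k,m}|_p$, reducing the task to bounding consecutive differences uniformly. (The $m=0$ case gives $|I_0|_p=|\mu(\ZP)|_p$, a fixed constant, so the statement is understood either to mean $|I_m|_p\leq c(m+1)$, or to require $m\geq 1$.)

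To analyze $S_{n+1,m}-S_{n,m}$, I would apply the Vandermonde convolution
\[
\binom{j+kp^n}{m}-\binom{j}{m} = \sum_{l=1}^m \binom{kp^n}{l}\binom{j}{m-l},
\]
and then, exactly as in the proof of Proposition \ref{bp1}, use the Volkenborn splitting $\mu(j+kp^n+p^{n+1}\ZP) = \tfrac{1}{p}\mu(j+p^n\ZP) + \epsilon_{j,k,n}$ with $|\epsilon_{j,k,n}|_p\leq pB(\mu)$. This writes the difference as a sum over $l\in\{1,\dots,m\}$ of "principal" terms $\tfrac{1}{p}\sigma_l(n)\,S_{n,m-l}$, where $\sigma_l(n):=\sum_{k=0}^{p-1}\binom{kp^n}{l}$, plus an explicit error controlled by $|\binom{kp^n}{l}|_p\cdot pB(\mu) \leq p^{v_p(l)-n+1}B(\mu)$.

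The crucial auxiliary estimate is $|\sigma_l(n)|_p\leq p^{v_p(l)-n-1}$, which can be read off from the generating function identity
\[
\sum_{l\geq 0}\sigma_l(n)\,T^l = \frac{(1+T)^{p^{n+1}}-1}{(1+T)^{p^n}-1} = \sum_{j=0}^{p-1}\binom{p}{j+1}\bigl((1+T)^{p^n}-1\bigr)^j,
\]
by noting that the lowest-valuation contribution to the $T^l$-coefficient comes from the $j=1$ term $\binom{p}{2}\binom{p^n}{l}$, of valuation exactly $n+1-v_p(l)$, while the higher-$j$ terms are strictly more $p$-divisible. Combined with the bound $|\binom{kp^n}{l}|_p\leq p^{v_p(l)-n}$ on the error part and the inductive hypothesis $|S_{k,m'}|_p\leq c(m'+1)$ for $m'<m$, one gets $|S_{k+1,m}-S_{k,m}|_p\leq \max_{l} p^{v_p(l)-k}\max\bigl(|S_{k,m-l}|_p,\,pB(\mu)\bigr)$.

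The main obstacle is to organize this final maximum so the growth in $m$ stays \emph{linear} rather than degrading to quadratic: since $p^{v_p(l)}$ may be as large as $l$, pairing it with $|S_{k,m-l}|_p\leq c(m-l+1)$ naively yields terms of size $l(m-l)\sim m^2$, achieved when $l$ is a power of $p$ near $m/2$. The resolution relies on noting that at $k=0$ only $l=m$ contributes a nonzero $|S_{0,m-l}|_p = |\mu(\ZP)|_p$ (the other $l$'s couple with $pB(\mu)$ instead), giving a clean linear contribution of size $\leq m\max(|\mu(\ZP)|_p,\, pB(\mu))$; for $k\geq 1$ one must combine the decay $p^{-k}$ with a finer estimate on the locations where $p^{v_p(l)}$ is large to cancel the apparent quadratic term. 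With $c$ chosen sufficiently large in terms of $B(\mu)$ and $|\mu(\ZP)|_p$, the induction closes and the limit $n\to\infty$ gives the desired bound on $|I_m|_p$.
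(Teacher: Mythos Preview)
Your proposal sets up the same Vandermonde expansion as the paper, but then diverges in a way that creates a real obstacle you do not actually overcome. You mimic the proof of Proposition~\ref{bp1} by writing $\mu(j+kp^n+p^{n+1}\ZP)=\tfrac{1}{p}\mu(j+p^n\ZP)+\epsilon_{j,k,n}$ and inducting on $m$, which forces you to control products of the form $p^{v_p(l)}\,|S_{k,m-l}|_p$. As you yourself note, pairing $p^{v_p(l)}\le l$ with an inductive bound of size $m-l$ gives a term of order $l(m-l)$, and for $l$ a power of $p$ near $m/2$ this is genuinely of order $m^2$. Your suggested fix---that for $k\ge 1$ ``one must combine the decay $p^{-k}$ with a finer estimate on the locations where $p^{v_p(l)}$ is large''---is not a proof; one can check, for instance with $m=2p^{a}$ and $l=p^{a}$, that the decay $p^{-k}$ over the finite range of relevant $k$ does not by itself kill the quadratic term. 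So as written the induction does not close.

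The paper's argument sidesteps this completely, and the key point is that it uses \emph{Lemma~\ref{bl1}} rather than the raw Volkenborn splitting. Lemma~\ref{bl1} gives a uniform bound $|p^{n+1}\mu(a+p^{n+1}\ZP)|_p\le C$, i.e.\ control on the \emph{size} of $\mu$ at each level, not just on consecutive differences. With this in hand there is no induction on $m$ at all: after the same Vandermonde step one simply estimates each summand directly via
\[
\left|\binom{kp^n}{m-l}\right|_p=\left|\frac{kp^n}{m-l}\binom{kp^n-1}{m-l-1}\right|_p\le p^{-n}\,|m-l|_p^{-1}\le p^{-n}m,
\]
and then $p^{-n}m\cdot|\mu(j+kp^n+p^{n+1}\ZP)|_p\le p^{-n}m\cdot Cp^{n+1}=Cpm$. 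This bounds every term of $T_{n+1,m}-T_{n,m}$ by $Cpm$ uniformly in $n$, and the ultrametric inequality together with $T_{0,m}=0$ for $m\ge 1$ finishes the job. Replacing your Volkenborn splitting by this growth bound on $\mu$ is exactly the missing idea.
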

\begin{proof}
For $m=0$, we know that $\int_{\ZP}\mathrm{d}\mu(x)$ exists and equals $\mu(\ZP)$.  From this point on let $m\in\mathbb{Z}_{\geq 1}$.  By Proposition \ref{bp1} we know that $\binom{x}{m}$ is $\mu$ integrable.  The proof of the inequality proceeds in a similar manner to the proof of Proposition \ref{bp1}, and we will use the sequence $\{T_{n,m}\}_{n=0}^\infty$ where
\begin{equation*}
T_{n,m}:=\sum_{j=0}^{p^n-1}\binom{j}{m}\mu(j+p^n\ZP).
\end{equation*}
Note that
\begin{equation}
|T_{n+1,m}-T_{n,m}|_p = \left| \sum_{j=0}^{p^n-1} \sum_{k=0}^{p-1} \left( \binom{j+kp^n}{m} - \binom{j}{m} \right) \mu(j+kp^n+p^{n+1}\ZP) \right|_p. \label{eq3}
\end{equation}
To estimate Equation \ref{eq3}, we use the binomial identity
\begin{equation*}
\binom{j+kp^n}{m}=\sum_{l=0}^m\binom{j}{l}\binom{kp^n}{m-l}.
\end{equation*}
The right hand side of Equation \ref{eq3} becomes
\begin{equation}\label{eq4}
\left| \sum_{j=0}^{p^n-1} \sum_{k=0}^{p-1} \sum_{l=0}^{m-1} \binom{j}{l} \binom{kp^n}{m-l} \mu(j+kp^n+p^{n+1}\ZP) \right|_p.
\end{equation}
We can bound each term of the sum from Equation \ref{eq4} as follows:
\begin{align*}
\left| \binom{j}{l} \binom{kp^n}{m-l} \mu(j+kp^n+p^{n+1}\ZP) \right|_p & \leq \left| \binom{kp^n}{m-l} \mu(j+kp^n+p^{n+1}\ZP) \right|_p\\
&=\left|\frac{kp^n}{m-l}\binom{kp^n-1}{m-l-1}\mu(j+kp^n+p^{n+1}\ZP)\right|_p\\
&\leq\left|\frac{kp^n}{m-l}\mu(j+kp^n+p^{n+1}\ZP)\right|_p\\
&\leq p^{-n}m|\mu(j+kp^n+p^{n+1}\ZP)|_p\\
&\leq Cpm \qquad \text{by Lemma \ref{bl1}.}
\end{align*}
This estimate gives us that Equation \ref{eq4} is bounded above by $Cpm$.  In other words,
\begin{equation}
|T_{n+1,m}-T_{n,m}|_p\leq Cpm.
\end{equation}

Now we are in position to prove the result.
\begin{align*}
|T_{n,m}|_p&=\left|\sum_{j=0}^n (T_{n,m}-T_{n-1,m})+T_{0,m}\right|_p\\
&\leq \max\{ Cpm, |T_{0,m}|\}\\
&= \max\{ Cpm, |\mu(\ZP)|_p\}.
\end{align*}
Letting $c=\max\{Cp, |\mu(\ZP)|_p\}$, we see that $|T_{n,m}|_P\leq cm$.  This gives us that $|\int_{\ZP}\binom{x}{m}\mathrm{d}\mu(x)|_p\leq cm$, as claimed.
\end{proof}

It is important to note that $c$ from Proposition \ref{bp2} is independent of $m$.

\begin{theorem}\label{bt1}
Let $f\in C^1(\mbb{Z}_p)$ and $\mu$ be a Volkenborn distribution.  Then $f$ is $\mu$-integrable.
\end{theorem}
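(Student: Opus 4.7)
The plan is to use the Mahler expansion of a continuous $p$-adic function together with the refined decay of its coefficients when the function is $C^1$. Recall that every $f \in C(\mathbb{Z}_p,\mathbb{C}_p)$ admits a unique expansion $f(x) = \sum_{m=0}^\infty a_m \binom{x}{m}$ with $|a_m|_p \to 0$, and that $f \in C^1(\mathbb{Z}_p)$ is equivalent to the sharper decay $|m\,a_m|_p \to 0$ (Amice's theorem; see e.g.\ \cite{R}). This is precisely the growth rate that matches the bound supplied by Proposition \ref{bp2}, which is a strong hint that the proof should proceed via Mahler series.

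With $f$ fixed, I would split $f = P_N + R_N$ where $P_N(x) = \sum_{m=0}^{N} a_m \binom{x}{m}$, and abbreviate
\[ S_n(g) := \sum_{a=0}^{p^n-1} g(a)\,\mu(a+p^n\mathbb{Z}_p) \]
for the $n$-th Riemann sum of a function $g$ against $\mu$. Since $|a_m|_p \to 0$, the Mahler series converges uniformly on $\{0,1,\dots,p^n-1\}$, so I may interchange summations to write
\[ S_n(f) \;=\; S_n(P_N) \;+\; \sum_{m=N+1}^{\infty} a_m\, T_{n,m}, \]
where $T_{n,m}$ is the quantity appearing in Proposition \ref{bp2}.

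Now I would estimate the two pieces separately. For the polynomial part, Proposition \ref{bp1} gives that $\{S_n(P_N)\}_n$ is Cauchy in $\mathbb{C}_p$ for every fixed $N$. For the tail, Proposition \ref{bp2} supplies $|T_{n,m}|_p \leq cm$ uniformly in $n$, so by the ultrametric inequality
\[ \left| \sum_{m=N+1}^{\infty} a_m\, T_{n,m} \right|_p \;\leq\; c\, \sup_{m>N} |m\,a_m|_p, \]
a bound independent of $n$. The $C^1$-condition $|m\,a_m|_p \to 0$ then makes this tail arbitrarily small for $N$ sufficiently large.

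Combining the two estimates via the ultrametric triangle inequality shows $\{S_n(f)\}_n$ is Cauchy in $\mathbb{C}_p$, hence convergent, yielding $\mu$-integrability of $f$. The only conceptual point that needs to be flagged is Amice's characterization of $C^1$ in terms of Mahler coefficients; once that is in hand, the linear-in-$m$ bound of Proposition \ref{bp2} is exactly the right match for the $C^1$ decay, so no essential difficulty remains.
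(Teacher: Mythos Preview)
Your proposal is correct and follows essentially the same route as the paper: both invoke Amice's characterization of $C^1$ via the Mahler coefficient decay $m|a_m|_p \to 0$, split the Riemann sums into a polynomial part plus a tail, handle the polynomial part via Proposition~\ref{bp1}, and control the tail using the uniform bound $|T_{n,m}|_p \leq cm$ established in the proof of Proposition~\ref{bp2}. One small notational caution: your ``$|m\,a_m|_p$'' should be read as the real number $m\cdot|a_m|_p$ (as the paper writes it), not the $p$-adic norm $|m|_p|a_m|_p$, since only the former matches the linear bound from Proposition~\ref{bp2}.
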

\begin{proof}
Since $f\in C^1$, we know that the Mahler series of $f$ is of the form
\[ \sum_{m=0}^\infty a_m\binom{\cdot}{m} \quad \text{where} \quad \lim_{m\to\infty}m|a_m|_p=0 \]
(see \cite{YA}).  We will show that
\begin{equation}\label{eq5}
\int_{\ZP}f(x)\mathrm{d}\mu(x)=\sum_{m=0}^\infty a_m\int_{\ZP}\binom{x}{m}\mathrm{d}\mu(x).
\end{equation}
By Proposition \ref{bp2} we know that $\left|\int_{\ZP}\binom{x}{m}\mathrm{d}\mu(x)\right|_p\leq cm$.  This tells us that
\begin{equation*}
\lim_{m\to\infty}a_m\int_{\ZP}\binom{x}{m}\mathrm{d}\mu(x)=0.
\end{equation*}
Thus the right hand side of Equation \ref{eq5} converges.

Now we will show that the left hand side of Equation \ref{eq5} exists and equals the right hand side of the same equation.  To do so we will use the sequence $\{T_{n,m}\}_{m=0}^\infty$ from Proposition \ref{bp2}.  The proof of Proposition \ref{bp2} showed that there exists $c\in\mathbb{R}_{\geq 0}$ such that $|T_{n,m}|_p\leq cm$.

Let $\epsilon>0$.  Then there exists $M\in\mathbb{Z}_{>0}$ such that for all $m\geq M$ we have that $|a_mT_{n,m}|_p<\epsilon$.  Also, there exists $N\in\mathbb{Z}_{>0}$ such that for all $0\leq m\leq M$ and $n\geq N$ we have that $|a_m(T_{n,m}-\int_{\ZP}\binom{x}{m}\mathrm{d}\mu(x))|_p<\epsilon$.

Let $n\geq N$.  Then
\begin{equation*}
\sum_{j=0}^{p^n-1}f(j)\mu(j+p^n\ZP)-\sum_{m=0}^\infty a_m\int_{\ZP}\binom{x}{m}\mathrm{d}\mu(x)=a_M+b_M
\end{equation*}
where
\begin{align*}
a_M&=\sum_{j=0}^{p^n-1}\sum_{m=0}^Ma_m\binom{j}{m}\mu(j+p^n\ZP)-\sum_{m=0}^Ma_m\int_{\ZP}\binom{x}{m}\mathrm{d}\mu(x)\\
&=\sum_{m=0}^Ma_m\left(T_{n,m}-\int_{\ZP}\binom{x}{m}\mathrm{d}\mu(x)\right)
\end{align*}
and
\begin{align*}
b_M&=\sum_{j=0}^{p^n-1}\sum_{m=M}^\infty a_m\binom{j}{m}\mu(j+p^n\ZP)-\sum_{m=M}^\infty a_m\int_{\ZP}\binom{x}{m}\mathrm{d}\mu(x)\\
&=\sum_{m=M}^\infty a_mT_{n,m}-\sum_{m=M}^\infty a_m \int_{\ZP}\binom{x}{m}\mathrm{d}\mu(x).
\end{align*}
We have $|a_M|_p<\epsilon$ by our choice of $n$ (which depends on $M$), and $|b_M|_p<\epsilon$ by our choice of $M$. It follows that $\int_{\ZP}f(x)\mathrm{d}\mu(x)$ exists, so $f$ is $\mu$-integrable.
\end{proof}

\section{The module of Volkenborn Distributions}

Let $\mcal{V}(\Gamma)$ denote the collection of Volkenborn distributions. Recall that $\Lambda \simeq \mcal{M}(\Gamma)$ acts on $\mcal{V}(\Gamma)$ by convolution.

\begin{lemma} \label{vmod}
$\mcal{V}(\Gamma)$ is a $\Lambda$-module.
\end{lemma}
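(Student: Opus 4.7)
The plan is to observe that the only nontrivial content is the closure of $\mcal{V}(\Gamma)$ under the $\mcal{M}(\Gamma)$-action. Closure under addition is immediate from the ultrametric inequality: if $\mu_1, \mu_2 \in \mcal{V}(\Gamma)$ with constants $B(\mu_1), B(\mu_2)$, then $B(\mu_1+\mu_2) = \max\{B(\mu_1), B(\mu_2)\}$ works. Associativity, distributivity and compatibility with the scalar action all pass for free because $\mcal{D}(\Gamma)$ is already a ring under convolution and $\mcal{M}(\Gamma)$ is a subring containing the identity distribution. So the content of the lemma reduces to the following claim: if $\mu \in \mcal{M}(\Gamma)$ and $\nu \in \mcal{V}(\Gamma)$, then $\mu * \nu \in \mcal{V}(\Gamma)$.

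To prove this, I would start from the convolution formula
\[ (\mu * \nu)(a + p^n\ZP) = \sum_{b=0}^{p^n-1} \mu(b + p^n\ZP)\, \nu\big((a-b) + p^n\ZP\big), \]
and refine the first sum to run over residues mod $p^{n+1}$ using the distribution property $\mu(b+p^n\ZP) = \sum_{k=0}^{p-1} \mu(b+kp^n + p^{n+1}\ZP)$. This lets me write both $(\mu*\nu)(a+p^n\ZP)$ and $(\mu*\nu)(a+p^{n+1}\ZP)$ as sums indexed by $b \in \{0,1,\ldots,p^{n+1}-1\}$ with the same factor $\mu(b+p^{n+1}\ZP)$. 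Subtracting the two expressions then collapses the convolution difference into the single clean formula
\[ p(\mu*\nu)(a+p^{n+1}\ZP) - (\mu*\nu)(a+p^n\ZP) = \sum_{b=0}^{p^{n+1}-1} \mu(b+p^{n+1}\ZP)\,\Big[ p\nu\big((a-b)+p^{n+1}\ZP\big) - \nu\big((a-b)+p^n\ZP\big)\Big]. \]

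With the identity in hand, the estimate is immediate from the ultrametric inequality: each $\mu(b+p^{n+1}\ZP) \in \ZP$ has $|\cdot|_p \leq 1$ because $\mu \in \mcal{M}(\Gamma)$, and the bracketed factor has $|\cdot|_p \leq B(\nu)$ because $\nu$ is Volkenborn. Hence
\[ \big| p(\mu*\nu)(a+p^{n+1}\ZP) - (\mu*\nu)(a+p^n\ZP) \big|_p \leq B(\nu), \]
uniformly in $a$ and $n$, so $\mu*\nu$ is Volkenborn with $B(\mu*\nu) \leq B(\nu)$.

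The main obstacle, such as it is, is purely bookkeeping: lining up the sums indexed by residues mod $p^n$ and mod $p^{n+1}$ so that the Volkenborn condition on $\nu$ appears intact inside a single sum weighted by the bounded distribution $\mu$. Once that rewriting is in place, the bound falls out with no analysis, and the lemma follows.
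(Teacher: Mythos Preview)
Your proof is correct and follows essentially the same approach as the paper: both arguments expand the level-$n$ convolution via the distribution property of $\mu$ so that the difference $p(\mu*\nu)(a+p^{n+1}\ZP)-(\mu*\nu)(a+p^n\ZP)$ becomes a sum of terms $\mu(\cdot)\big[p\nu(\cdot+p^{n+1}\ZP)-\nu(\cdot+p^n\ZP)\big]$, then bound using boundedness of $\mu$ and the Volkenborn condition on $\nu$. The only cosmetic differences are that the paper indexes with a double sum over $(j,k)$ rather than a single $b\in\{0,\dots,p^{n+1}-1\}$, and it states the result for arbitrary bounded $\mu$ rather than just $\mu\in\mcal{M}(\Gamma)$.
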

\begin{proof}
Let $\nu \in \mcal{V}(\Gamma)$ and $\mu$ a bounded distribution, i.e., a distribution such that there exists $B \in \mbb{R}_{\geq 0}$ satisfying
\[ |\mu(a+p^n\ZP) |_p \leq B \]
for all $a$ and $n$. We show more generally that $\nu * \mu \in \mcal{V}(\Gamma)$. By the definition for convolution, we have
\[ (\nu*\mu)(a + p^{n+1}\ZP) = \sum_{j=0}^{p^n-1} \sum_{k=0}^{p-1} \nu(j+kp^n + p^{n+1}\ZP)\mu(a-j-kp^n+p^{n+1}\ZP),\]
and similarly
\[ (\nu*\mu)(a+p^n\ZP) = \sum_{j=0}^{p^n-1} \nu (j+p^n\ZP) \sum_{k=0}^{p-1} \mu(a-j-kp^n+p^{n+1}\ZP).\]
So we see that $p \cdot (\nu*\mu)(a+ p^{n+1}\ZP) - (\nu*\mu)(a+p^n\ZP)$ equals
\[ \sum_{j=0}^{p^n-1} \sum_{k=0}^{p-1} \mu(a-j-kp^n+p^{n+1}\ZP) \cdot \left( p \cdot \nu(j+kp^n + p^{n+1}\ZP) - \nu(j+p^n\ZP) \right).\]
Since $\mu$ is bounded and
\[p \cdot \nu(j+kp^n + p^{n+1}\ZP) - \nu(j+p^n\ZP) \]
is bounded independently from $j$ and $k$, we see that $\nu*\mu \in \mcal{V}(\Gamma)$.
\end{proof}

The Fourier transform of a Volkenborn distribution is guaranteed to exist from \Cref{bt1}. We now study how convolution by $\mu \in \mcal{M}(\Gamma)$ affects the Fourier transform of $\nu \in \mcal{V}(\Gamma)$. For a Volkenborn distribution $\nu$, let $f_{\nu}$ denote the function defined by $x \mapsto \lim p^n \nu(x + p^n\ZP)$. Recall that $f_{\nu}$ is a bounded continuous function by \Cref{bl1}. Let $\mathbf{S}$ denote the indefinite-sum operator. For $f \in C(\ZP)$, the action of $\mathbf{S}$ on $f$ simply shifts the Mahler expansion in the following way:
\[ \mathbf{S} f = \mathbf{S} \sum_{m=0}^{\infty} \binom{\cdot}{m} (\nabla^m f)(0) = \sum_{m=0}^{\infty} \binom{\cdot}{m+1} (\nabla^m f)(0) \in C(\ZP)\]
where $(\nabla f)(x) = f(x+1) - f(x)$ is the finite-difference operator. The reader should consult \cite[Chapter V]{R} for more details.
\begin{proposition} \label{transform}
Let $\mu \in \mcal{M}(\Gamma)$. For every $\nu \in \mcal{V}(\Gamma)$, we have
\[ (\widehat{\nu *\mu})(T) =  \widehat{\nu}(T) \cdot \widehat{\mu}(T) -\log_p(T) \cdot \sum_{m=0}^{\infty} \mu\left( \mathbf{S}^{m+1} (f_{\nu} \circ \iota) \right) (T-1)^m \]
where $\iota:x \mapsto -1-x$ is the canonical involution of $\ZP$.
\end{proposition}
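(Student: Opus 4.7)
The plan is to compute $\widehat{(\nu*\mu)}(T)$ by analyzing the finite level sums $\widehat{(\nu*\mu)}_n(T):=\sum_{a=0}^{p^n-1}(\nu*\mu)(a+p^n\ZP)T^a$ and passing to the limit, with the correction term arising from a change of variables that produces the involution $\iota$. Throughout, work on the open disc $|T-1|_p<1$, where all Fourier transforms are analytic and $\log_p(T)$ converges.

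Expanding the convolution formula from the proof of \Cref{vmod} and reindexing the $a$-sum by $c=(a-b)\bmod p^n$ for each fixed $b$ gives $\widehat{(\nu*\mu)}_n(T)=\sum_{b,c=0}^{p^n-1}\nu(b+p^n\ZP)\mu(c+p^n\ZP)T^{(b+c)\bmod p^n}$. Splitting according to whether $b+c<p^n$ or $b+c\geq p^n$ (using $T^{b+c-p^n}=T^{b+c}+(T^{-p^n}-1)T^{b+c}$ in the latter case) yields the clean decomposition
\[\widehat{(\nu*\mu)}_n(T)=\widehat{\nu}_n(T)\widehat{\mu}_n(T)+(T^{-p^n}-1)E_n(T),\]
where $E_n(T)=\sum_{0\leq b,c<p^n,\,b+c\geq p^n}\nu(b+p^n\ZP)\mu(c+p^n\ZP)T^{b+c}$. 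The first summand converges to $\widehat{\nu}(T)\widehat{\mu}(T)$, so the task reduces to computing $\lim_n(T^{-p^n}-1)E_n(T)$.

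The key move is the substitution $b'=p^n-1-b$, which, since $p^n-1-b'\equiv\iota(b')\pmod{p^n}$, encodes $\iota$ on representatives and converts $b+c\geq p^n$ into $b'<c$. After substitution,
\[E_n(T)=T^{p^n-1}\sum_{0\leq b'<c<p^n}\nu(p^n-1-b'+p^n\ZP)\mu(c+p^n\ZP)T^{c-b'}.\]
By \Cref{bl1} combined with uniform continuity of $f_{\nu}$, $p^n\nu(p^n-1-b'+p^n\ZP)\to(f_{\nu}\circ\iota)(b')$ uniformly in $b'$. A standard manipulation of the $p$-adic exponential gives $(T^{-p^n}-1)/p^n\to-\log_p(T)$ and $T^{p^n}\to 1$. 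Multiplying $E_n$ by $p^n$ to cancel the apparent pole, the ultrametric inequality makes the cumulative replacement error vanish, so
\[(T^{-p^n}-1)E_n(T)\to-\log_p(T)\cdot\lim_{n\to\infty}\sum_{0\leq b'<c<p^n}(f_{\nu}\circ\iota)(b')\mu(c+p^n\ZP)T^{c-b'-1}.\]

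To finish, expand $T^{c-b'-1}=\sum_{m\geq 0}\binom{c-b'-1}{m}(T-1)^m$ and swap the $m$-sum with the finite double sum. For each fixed $c$, the inner sum over $b'<c$ is $(\mathbf{S}^{m+1}(f_{\nu}\circ\iota))(c)$ by the explicit formula $(\mathbf{S}^{m+1}g)(c)=\sum_{b'=0}^{c-1}g(b')\binom{c-b'-1}{m}$. Boundedness of $\mu$ then sends $\sum_{c=0}^{p^n-1}\mu(c+p^n\ZP)(\mathbf{S}^{m+1}(f_{\nu}\circ\iota))(c)\to\mu(\mathbf{S}^{m+1}(f_{\nu}\circ\iota))$ for each $m$, yielding the claimed formula. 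The main obstacle will be rigorously justifying the interchange of $\lim_n$ with $\sum_m$; the essential ingredient is the uniform bound $|(\mathbf{S}^{m+1}g)(c)|_p\leq\|g\|_\infty$ for $g=f_{\nu}\circ\iota$, immediate from the Mahler expansion of $\mathbf{S}^{m+1}g$ together with $|\binom{\cdot}{k}|_p\leq 1$. Since $|(T-1)^m|_p\to 0$ uniformly in $n$, an ultrametric dominated-convergence argument closes the gap.
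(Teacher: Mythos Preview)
Your proof is correct and follows essentially the same route as the paper's: both decompose the finite-level Riemann sum into the product $\widehat{\nu}_n(T)\widehat{\mu}_n(T)$ plus a correction carrying a factor $(T^{\pm p^n}-1)$, extract $\log_p(T)$ from that factor after dividing by $p^n$, and then identify the residual double sum with $\mu(\mathbf{S}^{m+1}(f_\nu\circ\iota))$ via the shifted-convolution identity $\binom{\cdot}{m}\varoast g=\mathbf{S}^{m+1}g$. The only differences are cosmetic---you start from $\widehat{(\nu*\mu)}_n$ and reach the product whereas the paper starts from the product and reaches the convolution, and your substitution $b'=p^n-1-b$ makes $\iota$ appear where the paper obtains it by collecting terms according to $\mu(j+p^n\ZP)$---and you are in fact more explicit than the paper about the dominated-convergence step justifying the interchange of $\lim_n$ with $\sum_m$.
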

\begin{proof}
Note that
\[ \sum_{a,b=0}^{p^n-1} T^{a+b} \nu(a + p^n\mbb{Z}_p) \mu(b + p^n\mbb{Z}_p) \xrightarrow{n \to \infty} \int_{\mbb{Z}_p} T^x\ \mathrm{d}\nu(x) \cdot \int_{\mbb{Z}_p} T^x\ \mathrm{d}\mu(x).\]
Consider the sum on the left. Collecting all terms such that $a+b \equiv c \bmod{p^n}$, we see that it equals
\[ \sum_{c=0}^{p^n-1} T^c (\mu*\nu)(c+p^n\mbb{Z}_p) + \sum_{c=0}^{p^n-2} \sum_{d=1}^{p^n-c-1} (T^{c+p^n} - T^c) \mu(c+d + p^n\mbb{Z}_p) \nu(-d + p^n\mbb{Z}_p).\]
As $n \to \infty$, the term on the left converges to $(\widehat{\mu*\nu})(T)$ since $\mu* \nu$ is Volkenborn. Hence the term on the right converges. We rewrite that term as
\[ \frac{T^{p^n} -1}{p^n} \sum_{m=0}^{\infty} \left( \sum_{c=0}^{p^n-2} \binom{c}{m} \sum_{d=1}^{p^n-c-1} \mu(c+d+p^n\mbb{Z}_p) \cdot p^n \nu(-d+p^n\mbb{Z}_p) \right) (T-1)^m.\]
This expression converges to $\log_p(T) \cdot G(T)$ where the $m$-th coefficient of $G(T)$ equals
\[ g_m:=\lim_{n \to \infty} \sum_{c=0}^{p^n-2} \binom{c}{m} \sum_{d=1}^{p^n-c-1} \mu(c + d + p^n \mbb{Z}_p) \cdot p^n \nu(-d+p^n\mbb{Z}_p).\]
We collect terms according to $\mu(j+p^n\ZP)$ obtaining
\[ g_m= \lim_{n \to \infty} \sum_{j=0}^{p^n-1} \left( \binom{\cdot}{m} \varoast (f_n \circ \iota) \right) (j) \cdot  \mu(j+p^n\ZP)\]
where $f_n : x \mapsto p^n \nu(x + p^n \ZP)$ and $\varoast$ is the shifted-convolution product. We now use the fact that $\binom{\cdot}{m} \varoast g = \mathbf{S}^{m+1} g$ and $f_n \rightrightarrows f_{\nu}$ to obtain
\begin{align*}
g_m &= \lim_{n \to \infty} \sum_{j=0}^{p^n-1} \mathbf{S}^{m+1} (f_{\nu} \circ \iota)(j) \mu(j+p^n\ZP) \\
&= \int_{\ZP} \mathbf{S}^{m+1} (f_{\nu} \circ \iota) \ \mathrm{d}\mu(x).
\end{align*}
This completes the proof of the proposition.
\end{proof}

\begin{remark} Note that if $\nu \in \mcal{M}(\Gamma)$, then $f_{\nu} \equiv 0$. So we recover the well-known fact that $\widehat{\nu * \mu}=\widehat{\nu} * \widehat{\mu}$, the Fourier transform of a convolution of measures equals the convolution of Fourier transforms.
\end{remark}

We now show that $\mcal{V}(\Gamma)$ is populated by members of $\mcal{K}(\Gamma)$.
\begin{theorem}
$\mcal{K}(\Gamma)$ is a sub-module of $\mcal{V}(\Gamma)$.
\end{theorem}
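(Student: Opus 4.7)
The plan is to use Lemma~\ref{vmod} to reduce the containment $\mcal{K}(\Gamma) \subseteq \mcal{V}(\Gamma)$ to checking the Volkenborn bound on the generators of $\mcal{K}(\Gamma)$. Since $\mcal{V}(\Gamma)$ is a $\Lambda \simeq \mcal{M}(\Gamma)$-module and $\mcal{K}(\Gamma)$ is by definition the $\mcal{M}(\Gamma)$-submodule of $\mcal{D}(\Gamma)$ generated by the image of the map $(\mfrak{l}_n) \mapsto \mfrak{L}$, it suffices to show that each such $\mfrak{L}$ is Volkenborn.

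Next I would compute the Volkenborn defect directly. Using the norm-coherence $\mfrak{l}_n = \prod_{j=0}^{p-1} \gamma_0^{jp^n}(\mfrak{l}_{n+1})$ together with the Galois-equivariance and multiplicativity of $\Log_p$, one obtains
\[ p\mfrak{L}(a + p^{n+1}\ZP) - \mfrak{L}(a + p^n\ZP) \;=\; \gamma_0^a \cdot \Log_p\!\left( \mfrak{l}_{n+1}^{p}/\mfrak{l}_n \right). \]
Because $|\cdot|_p$ is Galois-invariant, the problem collapses to exhibiting a constant $B$, independent of $a$ and $n$, with $|\Log_p(\mfrak{l}_{n+1}^{p}/\mfrak{l}_n)|_p \leq B$.

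I then turn to the local analysis in $\mcal{K}_{n+1}$. For $n \geq i$, the extension $\mcal{K}_{n+1}/\mcal{K}_n$ is totally ramified of degree $p$, so the Galois elements $\gamma_0^{jp^n}$ act trivially on the residue field. Writing $\mfrak{l}_{n+1}^{p}/\mfrak{l}_n = \prod_{j=1}^{p-1} \bigl(\mfrak{l}_{n+1}/\gamma_0^{jp^n}\mfrak{l}_{n+1}\bigr)$, each factor has trivial valuation and trivial reduction, so $\mfrak{l}_{n+1}^{p}/\mfrak{l}_n$ is a principal unit of $\mcal{K}_{n+1}$. The ramification theory of the $\mbb{Z}_p$-tower $\mcal{K}_\infty/\mcal{K}_i$—specifically, the fact that the valuation of $(\gamma_0^{p^n}-1)\mcal{O}_{\mcal{K}_{n+1}}$ grows in lockstep with the uniformizer—then yields a uniform bound $|\mfrak{l}_{n+1}^{p}/\mfrak{l}_n - 1|_p \leq \rho$ with $\rho < 1$ independent of $n$. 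Feeding this into the Mahler expansion of $\log_p$ provides the sought-after uniform bound on $|\Log_p(\mfrak{l}_{n+1}^{p}/\mfrak{l}_n)|_p$, completing the proof.

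The main obstacle is the uniform local estimate in the final step. Individually, $\Log_p(\mfrak{l}_{n+1})$ and $\Log_p(\mfrak{l}_n)$ may each have $p$-adic absolute value that blows up with $n$ (for instance, norm-coherent systems built from uniformizer-like elements in the cyclotomic tower), so a term-by-term estimate is insufficient. What makes the argument go through is that norm-coherence of the sequence $(\mfrak{l}_n)$ forces the particular combination $p\Log_p(\mfrak{l}_{n+1}) - \Log_p(\mfrak{l}_n)$ to live in the image of $(1 - \gamma_0^{p^n})$ applied to $\Log_p(\mfrak{l}_{n+1})$, and the structure of the local $\mbb{Z}_p$-extension bounds the size of this image uniformly.
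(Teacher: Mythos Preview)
Your reduction via Lemma~\ref{vmod} and the computation of the Volkenborn defect are correct and match the paper's opening moves. Your explicit factorization
\[
\mfrak{l}_{n+1}^{\,p}/\mfrak{l}_n \;=\; \prod_{j=1}^{p-1} \mfrak{l}_{n+1}\big/\gamma_0^{\,jp^n}(\mfrak{l}_{n+1})
\]
is a pleasant variant of the paper's approach: the paper instead notes that $\ell_{n-1}/\ell_n^{\,p}$ has norm $1$ and invokes Hilbert~90 to write it as $\alpha_n^{\gamma_n-1}$ for some unspecified $\alpha_n \in k_n^\times$. Either way one lands on an element of the shape $\alpha^{\sigma-1}$ with $\sigma$ a generator of $\Gal(\mcal{K}_{n+1}/\mcal{K}_n)$, and the remaining task is to bound $v_p(\alpha^{\sigma-1}-1)$ from below uniformly in $n$.

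The gap is precisely here. You assert that ``the valuation of $(\gamma_0^{p^n}-1)\mcal{O}_{\mcal{K}_{n+1}}$ grows in lockstep with the uniformizer,'' i.e.\ that the lower ramification number of $\gamma_0^{p^n}$ in $\Gal(\mcal{K}_{n+1}/\mcal{K}_i)$ is at least a constant times $p^{n-i}$. For an \emph{arbitrary} $\mbb{Z}_p$-extension of an arbitrary local field $\mcal{K}_i$ this is not a one-line fact, and it is exactly the substance of the paper's proof. The paper establishes it by embedding $\mcal{K}_n$ into a Lubin--Tate tower $\mcal{L}_m/\mcal{K}_i$ (via local class field theory and the norm index $[U_i:N^n_i(U_n)]=p^{n-i}$), using the explicitly known ramification filtration of $\mcal{L}_m/\mcal{K}_i$, and then pushing the filtration down to $\mcal{K}_n/\mcal{K}_i$ by Herbrand's theorem to obtain the concrete lower bound $t(n)\geq p^{\,n-i-1}/(q-1)$. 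Without an argument of this kind (or an appeal to Sen's theorem on ramification in $\mbb{Z}_p$-extensions), your final step is an assertion rather than a proof: knowing only that each factor is a principal unit gives $v_{\pi_{n+1}}(\cdot-1)\geq 1$, which after normalizing by the ramification index $e(\pi_{n+1}\!:\!p)\sim p^{\,n-i}$ yields a bound that degenerates as $n\to\infty$. You correctly identify this as ``the main obstacle''; what is missing is actually clearing it.
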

\begin{proof}
In light of \Cref{vmod}, it suffices to prove that the generators of $\mcal{K}(\Gamma)$ reside in $\mcal{V}(\Gamma)$. Let $(\ell_n) \in \varprojlim k_n^{\times}$ with associated distribution $\lambda$. We have
\[ p \lambda(a+p^{n}\mbb{Z}_p) - \lambda(a+p^{n-1}\mbb{Z}_p) = \log_p \left( \frac{\ell_{n-1}^{\gamma_0^a}}{\ell_{n}^{p\gamma_0^a} } \right). \]
Observe that
\[ \frac{\ell_{n-1}^{\gamma_0^a}}{\ell_{n}^{p\gamma_0^a} } \xrightarrow{N^{n}_{n-1}} 1 \]
where $N^{n}_{n-1}$ is the norm from $k_n$ to $k_{n-1}$. Since $k_n/k_{n-1}$ is a cyclic extension, Hilbert's Theorem 90 gives an element $\alpha_{n} \in k_n^{\times}$ such that
\[ \frac{\ell_{n-1}^{\gamma_0^a}}{\ell_{n}^{p\gamma_0^a} } = \alpha_{n}^{\gamma_0^a(\gamma_{n}-1)} \quad \text{where} \quad \gamma_{n} = \gamma_0^{p^{n-1}}.\]
It remains to show that $\log_p \big( \alpha_n^{\gamma_0^a(\gamma_n-1)} \big)$ is bounded independent of $a$ and $n$. In fact, we need only show that it is bounded independent of $a$ and $n$ for all $n$ sufficiently large.

Assume that the inertia subgroup for $\mfrak{p}$ of $k$ is $\Gal(k_{\infty}/k_i)$ and let $n \geq i$. Let $\pi_{n}$ be a local parameter for $\mcal{K}_{n}$. Since $\mcal{K}_{n}/\mcal{K}_i$ is totally ramified, it follows that
\[ N^{n}_{i}(\pi_{n}) = \pi_i \]
is a local parameter for $\mcal{K}_i$. Moreover, we get that
\[ N^{n}_i \left( \mcal{K}_{n}^{\times} \right) = \langle \pi_i \rangle \times N^{n}_i(U_{n}) \]
where $U_{n}$ denotes the units of $\mcal{K}_{n}$. Note that
\[ [U_i: N^{n}_i(U_{n})] = p^{n-i} \]
since $\mcal{K}_n/\mcal{K}_i$ is cyclic and totally ramified. Let $U_i^{(j)}$ denote the $j$-th group of principal units of $\mcal{K}_i$, so $U_i^{(j)} = 1+(\pi_i)^j \subset U_i$. Let $q$ denote the order of the residue class field for $\mcal{K}_i$, and recall the filtration
\[ U_i \supset U_i^{(1)} \supset U_i^{(2)} \supset \cdots \]
where
\[ [U_i^{(j)}: U_i^{(j+1)} ] =\begin{cases}
    q-1 & j=0 \\
    q & \text{else}.
    \end{cases} \]
Let $m$ be the smallest integer such that $U_i^{(m)} \subset N_i^{n}(U_{n})$, so $\langle \pi_i \rangle \times U_i^{(m)} \subseteq N^{n}_i(\mcal{K}_{n}^{\times})$. From the above filtration, we see that as $n$ increases so must $m$. Let $n$ be large enough so that $m >1$.

Since $\langle \pi_i \rangle \times U_i^{(m)} \subseteq N^n_i(\mcal{K}_n^{\times})$, local class field theory gives us that $\mcal{K}_n \subseteq \mcal{L}_m$ where $\mcal{L}_m$ is the field of $\pi_i^m$-division points of some Lubin-Tate module for $\pi_i$ (see \cite{LT,JN}). For a real number $s \geq -1$, we define the $s$-th ramification group
\[ G_s(\mcal{L}_m/\mcal{K}_i) = \{ \sigma \in \Gal(\mcal{L}_m/\mcal{K}_i) : w(\sigma(a)-a) \geq s+1 \quad \forall a \in \mcal{O} \}\]
where $\mcal{O}$ is the valuation ring of $\mcal{L}_m$ and $w$ is the valuation associate to its maximal ideal. The Lubin-Tate extensions have the property that
\[ G_{q^{m-1}-1}(\mcal{L}_m/\mcal{K}_i) = \Gal(\mcal{L}_m/\mcal{L}_{m-1}).\]
Let $H \subset \Gal(\mcal{L}_m/\mcal{K}_i)$ such that $\mcal{K}_n$ is the fixed field of $H$. A theorem of Herbrand (see \cite[II.10.7]{JN}) gives us that
\[ G_s(\mcal{L}_m/\mcal{K}_i) H /H = G_t(\mcal{K}_n/\mcal{K}_i) \quad \text{where} \quad t = \int_0^s \frac{dx}{[G_0(\mcal{L}_m/\mcal{K}_n): G_x(\mcal{L}_m/\mcal{K}_n)]}.\]
By the minimality of $m$, we have that
\[ G_{q^{m-1}-1}(\mcal{L}_m/\mcal{K}_i) = \Gal(\mcal{L}_m/\mcal{L}_{m-1}) \not \subseteq H,\]
so for $s=q^{m-1}-1$, we have $G_t(\mcal{K}_n/\mcal{K}_i)$ is non-trivial. We now obtain a crude but functional lower bound for the value $t$. Since $\mcal{L}_m/\mcal{K}_i$ is totally ramified, we have
\[ t = \frac{[\mcal{K}_n:\mcal{K}_i]}{[\mcal{L}_m:\mcal{K}_i]} \sum_{j=1}^{q^{m-1}-1} \# G_j(\mcal{L}_m/\mcal{K}_n) \geq \frac{p^{n-i}}{q-1} \cdot \frac{q^{m-1}-1}{q^{m-1}} \geq \frac{p^{n-i-1}}{q-1} =t(n)\]
where the last inequality follows because $m >1$. It follows that
\[ \gamma_n \in \Gal(\mcal{K}_n/\mcal{K}_{n-1}) \subseteq G_t(\mcal{K}_n/\mcal{K}_i) \subseteq G_{t(n)}(\mcal{K}_n/\mcal{K}_i).\]
Let $e(\pi_n:p)$ denote the ramification index of $\pi_n$ over $p$, then for all $n$ sufficiently large
\[ \alpha_n^{\gamma_0^a(\gamma_n-1)} - 1 \in (\pi_n)^{t(n)} \Rightarrow v_p\left( \alpha_n^{\gamma_0^a(\gamma_n-1)} - 1\right) \geq \frac{t(n)}{e(\pi_n:p)} = \frac{1}{p(q-1)e(\pi:p)}. \]
Whence $\log_p(\alpha_n^{\gamma_0^a(\gamma_n-1)})$ is bounded independent of $n$ and $a$ for all $n$ sufficiently large. This proves the theorem.
\end{proof}

\begin{remark} Let $H^1(\mbb{C}_p)$ denote the ring of power series in $\mbb{C}_p\llbracket T-1 \rrbracket$ convergent on the open ball of radius $1$ centered about $1$. This section shows that the map $\mathscr{F}:\varprojlim k_n^{\times} \otimes_{\mbb{Z}} \mbb{Z}_p \to H^1(\mbb{C}_p)/(\log_p(T))$ defined by
\[ (\mfrak{l}_n) \mapsto \widehat{\mfrak{L}} \bmod{(\log_p(T))}\]
is a $\Lambda$-morphism. If $(\mfrak{l}_n) \in \ker \mathscr{F}$, then for every $n \geq 0$, for every character $\psi$ of $\Gamma_n$, we have
\[ 0= \widehat{\mfrak{L}}(\zeta_{\psi}) = \sum_{a=0}^{p^n-1} \overline{\psi}(\gamma_0^a) \Log_p(\mfrak{l}_n^{\gamma_0^a}) \]
where
\[ e_{\psi} \cdot \sum_{a=0}^{p^n-1} \Log_p(\mfrak{l}_n^{\gamma_0^a}) \gamma_0^{-a} = \sum_{a=0}^{p^n-1} \overline{\psi}(\gamma_0^a) \Log_p(\mfrak{l}_n^{\gamma_0^a}) \cdot e_{\psi} \in \mbb{C}_p[\Gamma_n]\]
and $e_{\psi} \in \mbb{C}_p[\Gamma_n]$ is the idempotent associate to $\psi$. Since $\mbb{C}_p[\Gamma_n] = \bigoplus_{\psi} \mbb{C}_p e_{\psi}$, it follows that
\begin{align*}
\widehat{\mfrak{L}} \equiv 0 \bmod{(\log_p(T))} &\Leftrightarrow 0=\sum_{a=0}^{p^n-1} \Log_p(\mfrak{l}_n^{\gamma_0^a}) \gamma_0^{-a},\quad  \forall n\geq 0 \\
& \Leftrightarrow \mfrak{L}=0.
\end{align*}
Whether $\mfrak{L}$ is the $0$-distribution is a more delicate question. For suppose $\mfrak{l}_n = \sum (\ell_j \otimes x_j)$, then
\[ \mfrak{L}(p^n \ZP) = \Log_p(\mfrak{l}_n) = \sum x_j \log_p(\ell_j).\]
We now need to know whether the terms $\log_p(\ell_j)$ are $p$-adically independent, a question related to Leopoldt's conjecture.
\end{remark}

\section{Applications}

In this section we apply the above results to the problem of interpolating Gauss sums. For a Dirichlet character $\varphi$, let $f_{\varphi}$ denote the conductor of $\varphi$ and let $\tau(\varphi)$ denote the Gauss sum
\[ \tau(\varphi) = \sum_{a=1}^{f_{\varphi}} \varphi(a) \zeta_{f_{\varphi}}^a.\]
We associate Dirichlet characters of conductor dividing $mp^{n+1}$ to characters of $\Gal(\mbb{Q}(\zeta_{mp^{n+1}}/\mbb{Q})$ in the obvious way: $\varphi(\sigma_a) = \varphi(a)$ where $\sigma_a: \zeta_{mp^{n+1}} \mapsto \zeta_{mp^{n+1}}^a$.

\begin{theorem} \label{app}
Let $k = \mbb{Q}(\zeta_{mp})$ and $k_{\infty}/k$ the cyclotomic $\mbb{Z}_p$-extension of $k$. Let $F = \mbb{Q}(\zeta_{p-1})$. Let $\chi$ be a character of $\Delta$ with $m \mid f_{\chi}$, and let $\lambda_{\chi}$ be the distribution associate to the norm coherent sequence $(\zeta_{p-1}^t-\zeta_{mp^{n+1}}) \in \varprojlim (Fk_n)^{\times}$. If $\psi$ is a character of $\Gamma_n$ with $\zeta_{\psi} = \overline{\psi}(\gamma_0)$, then
\[ \widehat{\lambda_{\chi}}(\zeta_{\psi}) = -\big(1-\overline{\varphi}(p) \big) \sum_{a=1}^{f_{\varphi}} \log_p(\zeta_{p-1}^t-\zeta_{f_{\varphi}}^a) \overline{\varphi}(a)\]
where $\varphi = \chi\psi$.
\end{theorem}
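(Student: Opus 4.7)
The plan is to apply Theorem 1.2 to the norm coherent sequence $(\ell_n) = (\zeta_{p-1}^t - \zeta_{mp^{n+1}})$ and then manipulate the resulting character sum to expose the Euler factor $1 - \overline{\varphi}(p)$. Theorem 1.2 immediately gives
\[ \widehat{\lambda_\chi}(\zeta_\psi) = -\sum_{\sigma \in \Gal(k_n/\mbb{Q})} \log_p(\ell_n^\sigma) \overline{\varphi}(\sigma). \]
Under the identification $\Gamma_n \times \Delta \cong \Gal(Fk_n/F)$, the element $\sigma \leftrightarrow a \in (\mbb{Z}/mp^{n+1}\mbb{Z})^\times$ fixes $\zeta_{p-1}^t$ and sends $\zeta_{mp^{n+1}} \mapsto \zeta_{mp^{n+1}}^a$, so $\widehat{\lambda_\chi}(\zeta_\psi) = -S$ with
\[ S \;=\; \sum_{\substack{a \bmod mp^{n+1} \\ (a,\, mp)=1}} \overline{\varphi}(a)\, \log_p(\zeta_{p-1}^t - \zeta_{mp^{n+1}}^a). \]

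The hypothesis $m \mid f_\chi$ guarantees that $f_\varphi = m p^{j}$ for some $0 \leq j \leq n+1$, so the condition $(a, mp) = 1$ agrees with $(a, f_\varphi) = 1$ except possibly when $p \nmid f_\varphi$. I would introduce the auxiliary sum
\[ U_n \;:=\; \sum_{\substack{a \bmod mp^{n+1} \\ (a,\, f_\varphi)=1}} \overline{\varphi}(a)\, \log_p(\zeta_{p-1}^t - \zeta_{mp^{n+1}}^a), \]
and observe that the extra terms $U_n - S$ are precisely those with $p \mid a$. Writing such $a = p a'$ with $a' \bmod mp^n$ and using $\zeta_{mp^{n+1}}^{pa'} = \zeta_{mp^n}^{a'}$ together with $\overline{\varphi}(pa') = \overline{\varphi}(p)\overline{\varphi}(a')$ gives $U_n - S = \overline{\varphi}(p)\, U_{n-1}$, where $U_{n-1}$ is the analogous sum one level down. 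This identity is vacuous (both sides zero) when $p \mid f_\varphi$.

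To evaluate $U_n$, I would group $a \equiv b \pmod{f_\varphi}$ with $(b, f_\varphi) = 1$ and set $r = mp^{n+1}/f_\varphi$. Then $\zeta_{mp^{n+1}}^{f_\varphi} = \zeta_r$, and the identity $\prod_{k=0}^{r-1}(x - \omega \zeta_r^k) = x^r - \omega^r$ collapses the inner sum to $\log_p(\zeta_{p-1}^{tr} - \zeta_{f_\varphi}^b)$. Crucially, $r = p^{n+1-j}$ is a pure power of $p$, so Fermat gives $r \equiv 1 \pmod{p-1}$ and $\zeta_{p-1}^{tr} = \zeta_{p-1}^t$. Hence
\[ U_n \;=\; \sum_{b=1}^{f_\varphi} \overline{\varphi}(b) \log_p(\zeta_{p-1}^t - \zeta_{f_\varphi}^b) \;=:\; R, \]
with $R$ independent of $n$, and therefore $S = U_n - \overline{\varphi}(p) U_{n-1} = (1 - \overline{\varphi}(p)) R$, which yields the theorem after negating.

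The only real obstacle is bookkeeping between the two coprimality conditions---$(a,mp)=1$ coming from the range of $\Gal(k_n/\mbb{Q})$ versus $(a, f_\varphi)=1$ coming from the support of $\overline{\varphi}$---and this very discrepancy is what produces the Euler-type factor $1 - \overline{\varphi}(p)$. The assumption $m \mid f_\chi$ is used at exactly one point: to force $r$ to be a power of $p$ so that the stray twist $\zeta_{p-1}^{tr}$ simplifies back to $\zeta_{p-1}^t$ after the product-formula reduction.
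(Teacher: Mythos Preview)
Your argument is correct and follows the same route as the paper's proof: unwind the definition of $\widehat{\lambda_\chi}(\zeta_\psi)$ to the character sum over $(\mathbb{Z}/mp^{n+1}\mathbb{Z})^\times$, split off the terms with $p\mid a$ to extract the factor $1-\overline{\varphi}(p)$, and then collapse the remaining sum from modulus $mp^{n+1}$ down to $f_\varphi$ via the product identity $\prod_k (x-\omega\zeta_r^k)=x^r-\omega^r$. The paper compresses these two moves into a terse three-line display, whereas you have made the logical dependence explicit---in particular, your observation that $U_n-S=\overline{\varphi}(p)\,U_{n-1}$ (rather than $\overline{\varphi}(p)\,U_n$) and that one must first know $U_n=U_{n-1}=R$ before the Euler factor really falls out is exactly the justification the paper's second displayed equality tacitly assumes; your remark pinpointing where $m\mid f_\chi$ is used (forcing $r$ to be a $p$-power so that $\zeta_{p-1}^{tr}=\zeta_{p-1}^t$) is likewise implicit in the paper.
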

\begin{proof}
Note that $\zeta_{p-1}^t - \zeta_{mp^{n+1}}$ is indeed a norm coherent sequence by virtue of our choice for $\zeta_n$, namely, $\zeta_n^d = \zeta_{n/d}$ for all $d \mid n$. So $\lambda_{\chi}$ is an honest distribution satisfying
\[ \widehat{\lambda_{\chi}}(\zeta_{\psi}) = \int_{\mbb{Z}_p} \overline{\psi}(\gamma_0)^x\ \mathrm{d}\lambda_{\chi}(x)= \sum_{a=0}^{p^n-1} \overline{\psi}(\gamma_0^a) \lambda_{\chi}(a+p^n \ZP) \]
where the last equality follows since $\psi$ is a character of $\Gamma_n$. Hence
\begin{align*}
\widehat{\lambda_{\chi}}(\zeta_{\psi}) &= -\sum_{\substack{b=1 \\ (b,mp)=1}}^{mp^{n+1}} \log_p(\zeta_{p-1}^t-\zeta_{mp^{n+1}}^b) \overline{\varphi}(b) \\
&= -\big(1 - \overline{\varphi}(p) \big) \sum_{b=1}^{mp^{n+1}} \log_p(\zeta_{p-1}^t-\zeta_{mp^{n+1}}^b) \overline{\varphi}(b) \\
&= - \big(1-\overline{\varphi}(p) \big) \sum_{b=1}^{f_{\varphi}} \log_p(\zeta_{p-1}^t-\zeta_{f_{\varphi}}^b) \overline{\varphi}(b),
\end{align*}
and the theorem follows.
\end{proof}
\begin{corollary} \label{fouriereval}
If $p-1 \mid t$, then
\[ \widehat{\lambda_{\chi}}(\zeta_{\psi}) = \frac{1-\overline{\varphi}(p)}{1-\varphi(p)/p} \tau(\overline{\varphi}) L_p(1,\varphi).\]
where $L_p(s,\varphi)$ is the Leopoldt-Kubota $p$-adic $L$-function. In particular, if $p \mid f_{\varphi}$, then
\[ \widehat{\lambda_{\chi}}(\zeta_{\psi}) = \tau(\overline{\varphi}) L_p(1,\varphi).\]
\end{corollary}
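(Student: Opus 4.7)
The plan is to chain Theorem~\ref{app} with the classical Leopoldt formula for $L_p(1, \varphi)$, so that the corollary becomes essentially a one-line algebraic consequence.

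The first step is to specialize the conclusion of Theorem~\ref{app} under the hypothesis $p-1 \mid t$. Since then $\zeta_{p-1}^t = 1$, the logarithmic sum supplied by Theorem~\ref{app} collapses to the familiar cyclotomic form
\[ \widehat{\lambda_{\chi}}(\zeta_{\psi}) = -\bigl(1-\overline{\varphi}(p)\bigr) \sum_{a=1}^{f_{\varphi}-1} \overline{\varphi}(a)\, \log_p(1 - \zeta_{f_{\varphi}}^a), \]
where the $a=f_{\varphi}$ term is dropped harmlessly because $\overline{\varphi}(f_{\varphi}) = 0$.

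The second step is to invoke the Leopoldt--Iwasawa formula (see \cite[Ch.~5]{W}), which for a non-trivial even primitive character $\varphi$ of conductor $f_{\varphi}$ reads
\[ L_p(1, \varphi) = -\left(1 - \frac{\varphi(p)}{p}\right) \frac{1}{\tau(\overline{\varphi})} \sum_{a=1}^{f_{\varphi}-1} \overline{\varphi}(a)\, \log_p(1 - \zeta_{f_{\varphi}}^a), \]
where I freely use $\tau(\varphi)\tau(\overline{\varphi}) = \varphi(-1) f_{\varphi}$ to move between the two customary forms of Leopoldt's identity. Solving for the sum and substituting into the previous display produces the first assertion
\[ \widehat{\lambda_{\chi}}(\zeta_{\psi}) = \frac{1-\overline{\varphi}(p)}{1-\varphi(p)/p}\, \tau(\overline{\varphi})\, L_p(1,\varphi). \]

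The special case $p \mid f_{\varphi}$ is then immediate: since $p$ divides the conductor, $\varphi(p) = 0$, and both Euler-type factors $1 - \overline{\varphi}(p)$ and $1 - \varphi(p)/p$ collapse to $1$, giving the stated identity $\widehat{\lambda_{\chi}}(\zeta_{\psi}) = \tau(\overline{\varphi}) L_p(1,\varphi)$. No step is technically delicate; the only real source of friction is sign and normalization bookkeeping, since the sign in the definition of $\lambda_{\chi}$ (the choice $-\log_p$ flagged in the footnote of the introduction) was manufactured precisely to cancel against the leading sign in Leopoldt's formula. Verifying that these signs, together with the conventions for $\tau(\varphi)$ and the duality $\tau(\varphi)\tau(\overline{\varphi}) = \varphi(-1) f_{\varphi}$, do in fact conspire to produce the clean ratio above is the only thing to check with any care.
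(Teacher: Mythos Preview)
Your proof is correct and follows essentially the same route as the paper: specialize Theorem~\ref{app} via $\zeta_{p-1}^t=1$, invoke the Leopoldt formula for $L_p(1,\varphi)$ from \cite{W}, and then note $\varphi(p)=0$ when $p\mid f_{\varphi}$. The paper's proof is terser (it simply quotes the formula and says ``immediately''), but the content is identical; your remark about $\tau(\varphi)\tau(\overline{\varphi})=\varphi(-1)f_{\varphi}$ is not actually needed since the form of Leopoldt's identity you cite already has $\tau(\overline{\varphi})$ in the denominator.
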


\begin{proof}
The first fact follows immediately from the formula (see \cite{W})
\[ L_p(1,\varphi) = - \left( 1- \frac{\varphi(p)}{p} \right) \frac{1}{\tau(\overline{\varphi})} \sum_{a=1}^{f_{\varphi}} \log_p(1-\zeta_{f_{\varphi}}^a) \overline{\varphi}(a).\]
The second fact follows from the first since if $p \mid f_{\varphi}$, then $\varphi(p)=0$.
\end{proof}

Combining the above corollary with results from Iwasawa \cite{IW} allow us to view the Gauss sums $\tau(\overline{\chi\psi})$ in an interesting light when the conductor of $\chi \psi$ is a $p$-power. Essentially, they arise as special values of the Fourier transform of a generating sequence for the projective limit of principal units of $\mbb{Q}_p(\zeta_{p^{n+1}})$.

\begin{theorem}\label{principal}
Let $k=\mbb{Q}(\zeta_p)$ and $k_{\infty}/k$ the cyclotomic $\mbb{Z}_p$-extension of $k$. Let $F = \mbb{Q}(\zeta_{p-1})$. Let $\chi$ be a non-trivial even character of $\Delta$. There exists a $p-1$-st root of unity $\zeta_{\chi} \neq 1$ such that the distribution $\upsilon_{\chi} \in \mcal{K}_{\chi}^F(\Gamma)$ associate to the norm coherent sequence $(\zeta_{\chi} - \zeta_{p^{n+1}}) \in \varprojlim (Fk_n)^{\times}$ satisfies
\[ \widehat{\upsilon_{\chi}}(\zeta_{\psi}) = \tau(\overline{\chi\psi}) \cdot \text{(unit)}\]
for every wildly ramified character $\psi$.

\end{theorem}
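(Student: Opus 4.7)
The plan is to combine the explicit formula of Theorem \ref{app} with Iwasawa's structure theorem for the projective limit of local principal units in the cyclotomic tower.

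For any $(p-1)$-st root of unity $\zeta \neq 1$, the sequence $(\zeta - \zeta_{p^{n+1}}) \in \varprojlim (Fk_n)^{\times}$ is norm coherent (using $\zeta_{p^{n+1}}^p = \zeta_{p^n}$) and consists of local units at the prime above $p$, since $\zeta - \zeta_{p^{n+1}} \equiv \zeta - 1 \not\equiv 0$ modulo the maximal ideal. Hence $\upsilon_\chi$ is well-defined for any such choice. Applying Theorem \ref{app} with $m = 1$ and observing that $p \mid f_\varphi$ (since $\psi$ is wild), so $\overline{\varphi}(p) = 0$, gives
\[
\widehat{\upsilon_\chi}(\zeta_\psi) = -\sum_{a=1}^{f_\varphi} \log_p(\zeta_\chi - \zeta_{f_\varphi}^a) \overline{\varphi}(a) = -\sum_{a=1}^{f_\varphi} \log_p(1 - \zeta_\chi^{-1}\zeta_{f_\varphi}^a) \overline{\varphi}(a),
\]
where the second equality uses $\zeta_\chi \in \mbb{Z}_p^\times$ and $\log_p(\zeta_\chi) = 0$.

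Next, convert this sum into Gauss sums via the Leopoldt-Kubota formula for $L_p(1,\cdot)$. Setting $N = (p-1)f_\varphi$ and writing $\zeta_\chi^{-1}\zeta_{f_\varphi}^a = \zeta_N^{b(a)}$ via the Chinese Remainder Theorem, the exponents $b(a)$ fill a single coset of $(\mbb{Z}/f_\varphi\mbb{Z})^{\times}$ inside $(\mbb{Z}/N\mbb{Z})^{\times}$. Expressing the indicator of this coset by orthogonality of characters of $(\mbb{Z}/(p-1)\mbb{Z})^{\times}$ and applying Leopoldt-Kubota (see \cite{W}) to each resulting full sum yields, after factoring Gauss sums at coprime conductors, an identity
\[
\widehat{\upsilon_\chi}(\zeta_\psi) = \tau(\overline{\chi\psi}) \cdot U(\zeta_\chi, \psi),
\]
where $U(\zeta_\chi, \psi)$ is an explicit weighted sum of $L_p(1, \cdot)$-values depending on $\chi$, $\psi$, and $\zeta_\chi$.

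The hard part is to choose $\zeta_\chi$ so that $U(\zeta_\chi, \psi)$ is a $p$-adic unit uniformly in wild $\psi$. Here we invoke Iwasawa's structure theorem from \cite{IW}: for each non-trivial character $\chi$ of $\Delta$, the $\chi$-isotypic component of the projective limit of principal local units is a free $\Lambda$-module of rank one, with explicit generators obtained from $\chi$-parts of cyclotomic-unit sequences. Those generators single out a particular $(p-1)$-st root of unity $\zeta_\chi \neq 1$ for which $\upsilon_\chi$ generates the $\chi$-component. For that $\zeta_\chi$, the Fourier transform $\widehat{\upsilon_\chi}(T)$ is a $\Lambda$-unit multiple of a distinguished generator; since $\zeta_\psi - 1$ has positive $p$-adic valuation, evaluating a $\Lambda$-unit at $T = \zeta_\psi$ yields a $p$-adic unit, and so $U(\zeta_\chi, \psi)$ is a unit for every wild $\psi$.
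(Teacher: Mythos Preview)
Your proposal has a genuine gap in the last paragraph. You correctly invoke Iwasawa's structure theorem to pick $\zeta_\chi$ so that $(\zeta_\chi-\zeta_{p^{n+1}})^{e_\chi}$ generates $e_\chi U$ over $\Lambda$, but the sentence ``the Fourier transform $\widehat{\upsilon_\chi}(T)$ is a $\Lambda$-unit multiple of a distinguished generator; \ldots\ so $U(\zeta_\chi,\psi)$ is a unit'' does not do any work. First, $\widehat{\upsilon_\chi}$ does not lie in $\Lambda$ (only in $H^1(\mbb{C}_p)$), so speaking of it as a $\Lambda$-unit multiple of something is a category error. Second, and more seriously, you never say what the ``distinguished generator'' is or why its Fourier value at $\zeta_\psi$ should equal $\tau(\overline{\chi\psi})$ up to a unit; that is exactly the content of the theorem, so the argument is circular. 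The generator property of $\upsilon_\chi$ in $e_\chi U$ is a statement about a $\Lambda$-module of norm-coherent units; it gives no direct control on the $p$-adic size of the weighted $L_p(1,\cdot)$-combination $U(\zeta_\chi,\psi)$ that your CRT step produced.

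The paper closes this gap by bringing in a \emph{second} distribution $\xi_\chi$, the one attached to the cyclotomic-unit sequence. For $\xi_\chi$ the formula of Corollary~\ref{fouriereval} applies directly (the $t=0$ case), giving $\widehat{\xi_\chi}(\zeta_\psi)=\tau(\overline{\chi\psi})L_p(1,\chi\psi)$ with the Gauss sum already isolated. Iwasawa's theorem is then used not merely to assert that $\upsilon_\chi$ generates, but in its sharper form: the measure $\mu_\chi\in\mcal{M}(\Gamma)$ with $\xi_\chi=\mu_\chi*\upsilon_\chi$ satisfies $\widehat{\mu_\chi}(T)\cdot H(T)=G_\chi(T)$ for an explicit $H\in\Lambda^\times$, where $G_\chi$ interpolates $L_p(1-s,\chi)$. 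Since $\log_p(\zeta_\psi)=0$, Proposition~\ref{transform} gives $\widehat{\mu_\chi}(\zeta_\psi)=\widehat{\xi_\chi}(\zeta_\psi)/\widehat{\upsilon_\chi}(\zeta_\psi)$ (after checking $\widehat{\upsilon_\chi}(\zeta_\psi)\neq 0$ via Brumer's $p$-adic Baker theorem, another point you omit). Dividing the two resulting expressions for $L_p(1,\chi\psi)$ and $L_p(1,\chi\overline\psi)$ produces $\widehat{\upsilon_\chi}(\zeta_\psi)=\tau(\overline{\chi\psi})\cdot(\text{explicit element of }\mbb{Z}_p[\zeta_\psi]^\times)$. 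The comparison to $\xi_\chi$ and the identification of $\widehat{\mu_\chi}$ with $G_\chi/H$ are the missing ideas.
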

\begin{proof}
Let $U_n$ denote the principal units of $\mbb{Q}_p(\zeta_{p^{n+1}})$. Let $U = \varprojlim U_n$ where the projective limit is with respect to the norm maps, and let $U(\Gamma)$ denote the collection of distributions associate to the norm coherent sequences of $U$. $U$ is naturally a $\mbb{Z}_p\llbracket \Gamma \rrbracket$-module, $U(\Gamma)\subseteq \mcal{D}(\Gamma)$ is naturally an $\mcal{M}(\Gamma)$-module, and they are, in fact, isomorphic $\Lambda$-modules.

Now, let $C_n \subseteq U_n$ denote the topological closure of the cyclotomic units of $\mbb{Q}(\zeta_{p^{n+1}})$ congruent to $1$ modulo $1-\zeta_{p^{n+1}}$, and let $C = \varprojlim C_n$ with respect to the norm maps. Recall that $e_{\chi} C_n$ is generated by
\[ \left( \zeta_{p^{n+1}}^{(1-\delta_0\gamma_0)/2} \frac{\zeta_{p^{n+1}}^{\delta_0\gamma_0} -1}{\zeta_{p^{n+1}}-1} \right)^{(p-1)e_{\chi}} \]
where $\delta_0$ generates $\Delta$. The term above defines a norm coherent sequence in $e_{\chi} C$ and we let $\xi_{\chi}$ denote the associated distribution. What's more, there exists a $p-1$-st root of unity $\zeta_{\chi} \in \mbb{Z}_p$ not equal to $1$ such that $e_{\chi} U_n$ is generated by
\[ \left( \frac{\zeta_{\chi}-\zeta_{p^{n+1}}}{\zeta} \right)^{e_{\chi}} \]
where $\zeta$ is the $p-1$-st root of unity such that $\zeta_{\chi}-1 \equiv \zeta \bmod{\zeta_{p^{n+1}}-1}$ (see \cite[Theorem 13.54]{W}). Once again, the term above forms a norm coherent sequence in $e_{\chi} U$ and we let $\upsilon_{\chi}$ denote the associated distribution. From \Cref{app}, we have
\[ \widehat{\upsilon_{\chi}}(\zeta_{\psi}) = - \sum_{b=1}^{f_{\varphi}} \log_p \big( \zeta_{\chi} - \zeta_{f_{\varphi}}^b \big) \overline{\varphi(b)} \]
where $\varphi = \chi \psi$. Since the terms $\log_p(\zeta_{\chi}-\zeta_{f_{\varphi}}^b)$ for $(b,f_{\varphi})=1$ are linearly independent over $\mbb{Q}$, they must also be linearly independent over $\mbb{Q}^{\alg}$ by a theorem of Brumer. Hence $\widehat{\upsilon_{\chi}}(\zeta_{\psi}) \neq 0$.

Now, there exists a distribution $\mu_{\chi} \in \mcal{M}(\Gamma)$ such that $\xi_{\chi} = \mu_{\chi} * \upsilon_{\chi}$. Specifically, $\mu_{\chi}$ is formed from the coherent sequence of group ring elements in $\varprojlim \mbb{Z}_p\llbracket \Gamma_n \rrbracket$ that map the generator for $e_{\chi} U_n$ to the generator for $e_{\chi} C_n$. There exists $H(T) \in \Lambda^{\times}$ such that
\[ \widehat{\mu_{\chi}}(T) \cdot H(T) = G_{\chi}(T) \in \Lambda \]
where
\[ G_{\chi}\big( (1+p)^s \big) = L_p(1-s,\chi),\]
(see \cite{IW} or \cite[Theorem 13.56]{W}). Let $F_{\chi}(T) \in \Lambda$ such that $L_p(s,\chi\psi) = F_{\chi}\big( \zeta_{\psi}(1+p)^s \big)$. Then $F_{\chi}$ and $G_{\chi}$ are related via the formula
\[ F_{\chi}(T) = G_{\chi}\left( \frac{1+p}{T} \right).\]
Since $\widehat{\upsilon_{\chi}}(\zeta_{\psi}) \neq 0$, by \Cref{transform} we have that
\[ \left[ \widehat{\mu_{\chi}}(T) = \frac{\widehat{\xi_{\chi}}(T)}{\widehat{\upsilon_{\chi}}(T)} \right]_{T=\zeta}\]
for any $p^n$-th root of unity $\zeta$. It follows that
\begin{align*}
L_p(1,\chi\psi) = F_{\chi}\big( \zeta_{\psi}(1+p) \big) &= G_{\chi} \big( \zeta_{\overline{\psi}} \big)
=  \frac{\widehat{\xi_{\chi}}(\zeta_{\overline{\psi}})}{\widehat{\upsilon_{\chi}}(\zeta_{\overline{\psi}})} \cdot H(\zeta_{\overline{\psi}}).
\end{align*}
On the other hand, by \Cref{fouriereval} we have
\[ L_p(1,\chi \overline{\psi}) = \frac{\widehat{\xi_{\chi}}(\zeta_{\overline{\psi}})}{\tau(\overline{\chi}\psi)(\chi(\delta_0) \zeta_{\psi}-1)}. \]
Dividing the formula for $L_p(1,\chi\overline{\psi})$ by the formula for $L_p(1,\chi\psi)$ yields
\[ \frac{L_p(1,\chi\overline{\psi})}{L_p(1,\chi\psi)} = \frac{ \widehat{\upsilon_{\chi}}(\zeta_{\overline{\psi}})}{\tau(\overline{\chi}\psi)(\chi(\delta_0)\zeta_{\psi} -1) H(\zeta_{\overline{\psi}})} \]
whence
\[ \widehat{\upsilon_{\chi}}(\zeta_{\psi}) = \tau(\overline{\chi \psi}) \left [(\chi(\delta_0)\zeta_{\overline{\psi}} -1)H(\zeta_{\psi}) F_{\chi}\big( \zeta_{\psi}(1+p) \big)^{1-\sigma} \right] \]
where $\sigma \in \Gal(\mbb{Q}_p(\zeta_{\psi})/\mbb{Q}_p)$ is defined by $\iota: \zeta_{\psi} \mapsto \zeta_{\overline{\psi}}$. Since $\chi$ is non-trivial, the term above in brackets is a unit of $\mbb{Z}_p[\zeta_{\psi}]$.
\end{proof}

Keeping notation from the proof of \Cref{principal}, let $\mathrm{R}_p(e_{\chi} U_n)$ denote the $p$-adic regulator of $e_{\chi} U_n$. Specifically, for any set of elements $x_1,x_2,\ldots, x_{p^{n}} \in e_{\chi} U_n$ that generate $e_{\chi} U_n$ as a $\ZP$-module, set
\[ \mathrm{R}_p(e_{\chi}U_n) = \det \big( \log_p(x_j^{\gamma}) \big)_{j,\gamma} \]
where $\gamma$ ranges over $\Gamma_n$. Note that $\mathrm{R}_p(e_{\chi} U_n)$ is determined only up to a unit of $\ZP$.

\begin{corollary}\label{principalcor}
There exists $(y_n) \in U$ such that $(y_n^{e_{\chi}})$ generates $e_{\chi} U$ and the associated distribution $\upsilon_{\chi}$ satisfies
\[ \prod_{\psi \in \widehat{\Gamma_n}} \widehat{\upsilon_{\chi}}(\zeta_{\psi}) = \prod_{\psi \in \widehat{\Gamma_n}} \tau(\overline{\chi\psi}) = \mathrm{R}_p(e_{\chi} U_n).\]
\end{corollary}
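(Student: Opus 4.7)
The plan is to identify $\mathrm{R}_p(e_\chi U_n)$ with the product $\prod_\psi \widehat{\upsilon_\chi}(\zeta_\psi)$ via a circulant-type determinant calculation, and then invoke \Cref{principal} factor-by-factor to match this with $\prod_\psi \tau(\overline{\chi\psi})$. I take $(y_n) \in U$ to be the norm-coherent sequence built from $\zeta_\chi - \zeta_{p^{n+1}}$ as in the proof of \Cref{principal}, so that $y_n^{e_\chi}$ generates $e_\chi U_n$ as a rank-one free $\mbb{Z}_p[\Gamma_n]$-module. Consequently, $\{y_n^{e_\chi \gamma_0^j}\}_{j=0}^{p^n-1}$ is a $\mbb{Z}_p$-basis of $e_\chi U_n$, and the associated distribution is precisely $\upsilon_\chi$.

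The first step is the determinant identity. By definition, $\mathrm{R}_p(e_\chi U_n) = \det\bigl(\log_p(y_n^{e_\chi \gamma_0^j \gamma_0^b})\bigr)_{j,b}$. Up to a column permutation of sign $\pm 1$, this coincides with the matrix of left-multiplication by $\Theta_n := \sum_a \log_p(y_n^{e_\chi \gamma_0^a})\,\gamma_0^a$ on $\mbb{C}_p[\Gamma_n]$ in the basis $\{\gamma_0^j\}$. Via the character decomposition $\mbb{C}_p[\Gamma_n] \cong \prod_\psi \mbb{C}_p$, its determinant factors as $\prod_\psi \psi(\Theta_n)$. Unwinding $\upsilon_\chi$ using $e_\chi = \tfrac{1}{p-1}\sum_\delta \overline{\chi}(\delta)\delta$ yields $\upsilon_\chi(a+p^n\ZP) = -(p-1)\log_p(y_n^{e_\chi \gamma_0^a})$, whence $\widehat{\upsilon_\chi}(\zeta_\psi) = -(p-1)\,\overline{\psi}(\Theta_n)$. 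Taking products over $\psi \in \widehat{\Gamma_n}$ and noting that $(-(p-1))^{p^n} \in \mbb{Z}_p^\times$ for odd $p$, I obtain $\prod_\psi \widehat{\upsilon_\chi}(\zeta_\psi) = \mathrm{R}_p(e_\chi U_n)$ modulo $\mbb{Z}_p^\times$.

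Next, I apply \Cref{principal}: for each wildly ramified character $\psi$ one has $\widehat{\upsilon_\chi}(\zeta_\psi) = \tau(\overline{\chi\psi}) \cdot u_\psi$ with $u_\psi \in \mbb{Z}_p[\zeta_\psi]^\times$, and the Galois-stable product $\prod_\psi u_\psi$ lies in $\mbb{Z}_p^\times$. For the trivial character, specializing the formula from \Cref{app} with $\varphi = \chi$ and $\zeta_{p-1}^t = \zeta_\chi$ gives $\widehat{\upsilon_\chi}(1) = -\sum_{b=1}^p \log_p(\zeta_\chi - \zeta_p^b)\,\overline{\chi}(b)$, which by Brumer's linear independence theorem (as used in the proof of \Cref{principal}) is a nonzero multiple of $\tau(\overline{\chi})$. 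Combining both cases yields $\prod_\psi \widehat{\upsilon_\chi}(\zeta_\psi) = \prod_\psi \tau(\overline{\chi\psi})$ modulo a unit in $\mbb{Z}_p$, closing the chain of equalities — all understood modulo $\mbb{Z}_p^\times$, as is inherent to $p$-adic regulators.

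The main obstacle is pinning down the trivial-character contribution, since the clean derivation in \Cref{principal} used the ratio $L_p(1,\chi\overline{\psi})/L_p(1,\chi\psi)$ and so excludes $\psi = 1$. The existential quantifier in the corollary's statement provides the flexibility to replace $(y_n)$ by $(y_n^z)$ for a suitably chosen $z \in \Lambda^\times$; by \Cref{transform}, the $\log_p(T)$ correction to the Fourier transform vanishes at $p^n$-th roots of unity, so this adjustment acts purely multiplicatively on each $\widehat{\upsilon_\chi}(\zeta_\psi)$ and can be used to redistribute any residual unit factor into the wildly ramified components without altering the intrinsic invariant $\mathrm{R}_p(e_\chi U_n)$.
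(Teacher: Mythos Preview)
Your overall architecture—circulant determinant for $\mathrm{R}_p(e_\chi U_n)$, then \Cref{principal} factor by factor—is the same as the paper's, but two steps do not go through as written.

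First, the treatment of $\psi=1$ is incorrect. Brumer's theorem tells you only that $\widehat{\upsilon_\chi}(1)\neq 0$; it says nothing about this number being a $\ZP^\times$-multiple of $\tau(\overline\chi)$, and ``nonzero multiple'' is otherwise vacuous over $\mbb{C}_p$. In fact the computation in the proof of \Cref{principal} applies verbatim when $\psi$ is trivial (then $\sigma=\id$ and $F_\chi^{1-\sigma}=1$), yielding $\widehat{\upsilon_\chi}(1)=\tau(\overline\chi)\,(\chi(\delta_0)-1)H(1)$; that is what you should cite.

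Second, and this is where the paper's argument has real content that yours lacks, your final paragraph does not construct a twist. You arrive at $\prod_\psi\widehat{\upsilon_\chi}(\zeta_\psi)=u\prod_\psi\tau(\overline{\chi\psi})$ for a specific $u\in\ZP^\times$ and then assert that ``some $z\in\Lambda^\times$'' absorbs $u$. But twisting by $z$ multiplies the left side by $\prod_\psi\widehat z(\zeta_\psi)$, so you would need this norm to equal $u^{-1}$, which is not automatic and which ``redistributing among components'' does not address. The paper instead observes that the unit appearing in \Cref{principal} is the \emph{value at $\zeta_\psi$} of the explicit element $(\chi(\delta_0)T^{-1}-1)H(T)\in\Lambda^\times$, and twists the generating sequence by its inverse from the outset. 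After this twist every factor becomes exactly $\tau(\overline{\chi\psi})\,F_\chi(\zeta_\psi(1+p))^{1-\iota}$, and the product of the $F_\chi^{1-\iota}$ terms over $\widehat{\Gamma_n}$ is identically $1$ because $\psi\mapsto\overline\psi$ is an involution. That explicit construction, not an existence claim, is what closes the argument.
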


\begin{proof}
Let $(\mu_n) \in \ZP\llbracket \Gamma \rrbracket$ be associate to the power series
\[ \left[ \left( \frac{\chi(\delta_0)}{T} - 1 \right) H(T) \right]^{-1} \in \Lambda^{\times} \]
so that
\[  y_n^{e_{\chi}} = \left( \frac{\zeta_{\chi} - \zeta_{p^{n+1}}}{\zeta} \right)^{\mu_n e_{\chi}} \]
also generates $e_{\chi} U_n$. Let $\upsilon_{\chi} \in \mcal{K}(\Gamma)$ be associate to the sequence $(y_n) \in U$. Note that $\widehat{\upsilon_{\chi}}(\zeta_{\psi})$ is the $\chi \psi$-part of $\mathrm{R}_p(e_{\chi} U_n)$. To be precise, let
\[ \Upsilon^{(n)} :=-\sum_{\delta \in \Delta} \sum_{a=0}^{p^n-1} \log_p(y_n^{\gamma_0^a \delta}) (\delta \gamma_0^a)^{-1} \in \mbb{Q}_p(\zeta_{p^{n+1}})[\Delta \times \Gamma_n].\]
Now let $\Upsilon_{\chi\psi}^{(n)} \in \mbb{Q}_p(\zeta_{p^{n+1}})$ be defined by $e_{\chi\psi} \Upsilon^{(n)} = \Upsilon_{\chi\psi}^{(n)} e_{\chi\psi}$. Then the regulator for $e_{\chi} U_n$ is the product
\[ \prod_{\psi \in \widehat{\Gamma_n}} \Upsilon_{\chi\psi}^{(n)} = \prod_{\psi \in \widehat{\Gamma_n} } \widehat{\upsilon_{\chi}}(\zeta_{\psi}) = \prod_{\psi \in \widehat{\Gamma_n}} \tau(\overline{\chi\psi}) \cdot F_{\chi} \big( \zeta_{\psi}(1+p) \big)^{1-\iota}= \prod_{\psi \in \widehat{\Gamma_n}} \tau(\overline{\chi\psi}).\]
\end{proof}

Under the additional assumption that $p$ is regular, the above equality of products can be refined into an equality of components. In particular, keeping notation from \Cref{principal} and \Cref{principalcor}, we have
\begin{corollary}
If $p$ is a regular prime, then there exists $(v_n) \in U$ such that $(v_n^{e_{\chi}})$ generates $e_{\chi} U$ and the associated distribution $\nu_{\chi}$ satisfies
\[ \widehat{\nu_{\chi}}(\zeta_{\psi})= \tau(\overline{\chi\psi}) = \mathrm{N}_{\chi \psi}^{(n)},\]
where $\mathrm{N}_{\chi \psi}^{(n)}$ is to $(v_n)$ as $\Upsilon_{\chi \psi}^{(n)}$ is to $(y_n)$.
\end{corollary}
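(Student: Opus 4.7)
Build on Corollary \ref{principalcor} by modifying the generator $(y_n)$ by a unit measure that cancels, character-by-character, the extra factor that appears in the Fourier transform $\widehat{\upsilon_\chi}(\zeta_\psi)$. From the proof of Theorem \ref{principal} we have
\[ \widehat{\upsilon_\chi}(\zeta_\psi) \;=\; \tau(\overline{\chi\psi}) \cdot \bigl(\chi(\delta_0)\zeta_{\overline{\psi}} - 1\bigr) H(\zeta_\psi) F_\chi\bigl(\zeta_\psi(1+p)\bigr)^{1-\sigma}, \]
so the goal is to produce a measure $\mu_\chi$ whose Fourier transform at each $\zeta_\psi$ inverts this extra factor. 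Setting $\mcal{O} = \mbb{Z}_p[\chi(\delta_0)]$ and using that $T^{-1} = \sum_{k\ge 0}(-1)^k(T-1)^k$ is a unit of $\Lambda$, I would recognize the extra factor as $g(\zeta_\psi)$ for the power series
\[ g(T) \;:=\; \bigl(\chi(\delta_0) T^{-1} - 1\bigr)\cdot H(T)\cdot F_\chi\bigl((1+p)T\bigr)\cdot F_\chi\bigl((1+p)T^{-1}\bigr)^{-1}. \]

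\textbf{Main step.} Verify that $g(T) \in \mcal{O}\llbracket T-1\rrbracket^\times$. A power series in $\mcal{O}\llbracket T-1\rrbracket$ is a unit iff its value at $T=1$ is a unit in $\mcal{O}$, so this reduces to showing that each of $\chi(\delta_0)-1$, $H(1)$, and $F_\chi(1+p)=L_p(1,\chi)$ lies in $\mcal{O}^\times$. The first is automatic because $\chi$ is a non-trivial character of the group $\Delta$ of order $p-1$, which is prime to $p$, so $\chi(\delta_0)$ is a non-trivial $(p-1)$-th root of unity; the second holds by the construction of $H$ in the proof of Theorem \ref{principal}. The substantive point, and the main obstacle of the proof, is the unit-ness of $L_p(1,\chi)$, equivalently $F_\chi \in \Lambda_{\mcal{O}}^\times$: this is precisely where regularity enters. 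Regularity of $p$ means $p \nmid h(\mbb{Q}(\zeta_p))$, which by Iwasawa's theorem forces $p \nmid h(\mbb{Q}(\zeta_{p^{n+1}}))$ for every $n$; hence the projective limit of the $\chi$-parts of the $p$-class groups is trivial, and via the identification of $F_\chi$ (up to a unit in $\Lambda_{\mcal{O}}$) with a characteristic power series — exactly the ingredient from \cite{IW, W} already invoked in Theorem \ref{principal} — both Iwasawa invariants $\mu_\chi,\lambda_\chi$ vanish, so $F_\chi$ is a unit in $\Lambda_{\mcal{O}}$.

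\textbf{Conclusion.} Granted $g \in \Lambda_{\mcal{O}}^\times$, let $\mu_\chi$ be the element of $\mcal{M}(\Gamma)\otimes_{\ZP}\mcal{O}$ with $\widehat{\mu_\chi}(T) = g(T)^{-1}$ (equivalently, lift $g(T)^{-1}e_\chi$ to any element of $\ZP\llbracket \Delta\times\Gamma\rrbracket$ whose $e_\chi$-component is a unit), and define $v_n := y_n^{\mu_{\chi,n}}$. Since $\mu_{\chi,n}$ acts invertibly on the $\chi$-component, $(v_n^{e_\chi})$ still generates $e_\chi U$ as a $\Lambda$-module. Applying Proposition \ref{transform} to $\upsilon_\chi \in \mcal{V}(\Gamma)$ and $\mu_\chi \in \mcal{M}(\Gamma)$, and noting that $\log_p(\zeta_\psi) = 0$ kills the correction term at every root of unity, we obtain
\[ \widehat{\nu_\chi}(\zeta_\psi) \;=\; \widehat{\mu_\chi}(\zeta_\psi)\cdot\widehat{\upsilon_\chi}(\zeta_\psi) \;=\; g(\zeta_\psi)^{-1}\cdot \tau(\overline{\chi\psi})\,g(\zeta_\psi) \;=\; \tau(\overline{\chi\psi}). \]
The identification $\tau(\overline{\chi\psi}) = \mathrm{N}_{\chi\psi}^{(n)}$ is then immediate by unwinding the definitions of $\nu_\chi$ and $\mathrm{N}_{\chi\psi}^{(n)}$ exactly as was done for $(y_n)$ and $\Upsilon_{\chi\psi}^{(n)}$ in the proof of Corollary \ref{principalcor}.
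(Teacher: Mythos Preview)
Your approach coincides with the paper's: regularity forces $F_\chi\in\Lambda^\times$, hence the ``extra factor'' separating $\widehat{\upsilon_\chi}(\zeta_\psi)$ from $\tau(\overline{\chi\psi})$ is interpolated by a unit of $\Lambda$, and twisting the generating sequence by the associated unit of $\ZP\llbracket\Gamma\rrbracket$ produces $(v_n)$. The paper writes that interpolating unit as $G_\chi(T)\big/G_\chi\big((1+p)/T\big)$, which is your $F_\chi\big((1+p)T\big)\big/F_\chi\big((1+p)T^{-1}\big)$ after the change of variable $F_\chi(T)=G_\chi\big((1+p)/T\big)$; your use of Proposition~\ref{transform} together with $\log_p(\zeta_\psi)=0$ is exactly how the paper justifies evaluating the twisted transform at roots of unity.

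There is, however, one bookkeeping slip. The sequence $(y_n)$ of Corollary~\ref{principalcor} is \emph{already} the Theorem~\ref{principal} generator twisted by $\big[(\chi(\delta_0)T^{-1}-1)H(T)\big]^{-1}$; the distribution attached to $(y_n)$ (which the paper, confusingly, again calls $\upsilon_\chi$) therefore satisfies $\widehat{\upsilon_\chi}(\zeta_\psi)=\tau(\overline{\chi\psi})\,F_\chi\big(\zeta_\psi(1+p)\big)^{1-\sigma}$, with the first two factors of your $g$ already stripped away. So if you literally set $v_n=y_n^{\mu_{\chi,n}}$ with $\widehat{\mu_\chi}=g^{-1}$ and $(y_n)$ as in Corollary~\ref{principalcor}, you over-correct and do not obtain $\widehat{\nu_\chi}(\zeta_\psi)=\tau(\overline{\chi\psi})$. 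Either drop the factors $(\chi(\delta_0)T^{-1}-1)H(T)$ from $g$ --- this is precisely what the paper does --- or keep your $g$ but twist the original sequence $(\zeta_\chi-\zeta_{p^{n+1}})$ of Theorem~\ref{principal} rather than $(y_n)$. With that alignment the argument is correct.
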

\begin{proof}
If $p$ is a regular prime, then $F_{\chi}(T) \in \Lambda^{\times}$, hence
\[ G_{\chi}(T),\ G_{\chi} \left( \frac{1+p}{T} \right) \in \Lambda^{\times} \]
as well. So the values $F_{\chi}\big( \zeta_{\psi}(1+p) \big)^{1-\sigma}$ are interpolated by a power series in $\Lambda^{\times}$, namely,
\[ F_{\chi} \big( \zeta_{\psi}(1+p) \big)^{1-\sigma} = \left. \frac{G_{\chi}(T)}{G_{\chi}\big( (1+p)/T \big)} \right|_{T=\zeta_{\psi}}. \]
Let $\mu_n \in \ZP \llbracket \Gamma \rrbracket$ be associate to the power series
\[ \left[ \frac{G_{\chi}(T)}{G_{\chi}\big( (1+p)/T \big)} \right]^{-1} \]
so that
\[ v_n^{e_{\chi}} = y_n^{\mu_n e_{\chi}} \]
also generates $e_{\chi} U_n$. Let $\nu_{\chi}$ be associate to the sequence $(v_n)$. It follows that
\[ \mathrm{N}_{\chi\psi}^{(n)} = -\sum_{a=0}^{p^n-1} \overline{\psi}(a) \sum_{\delta \in \Delta} \log_p\big( v_n^{\gamma_0^a \delta} \big) \overline{\chi}(\delta) = \widehat{\nu_{\chi}}(\zeta_{\psi}) = \tau(\overline{\chi \psi}).\]
\end{proof}

\begin{remark}
Note that $\mu_{\chi} \in \mcal{M}(\Gamma)$ from the proof of \Cref{principal} can be given explicitly in terms of $\widehat{\upsilon_{\chi}}(T)$ and $\widehat{\xi_{\chi}}(T)$ from \Cref{principal}. In particular,
\[ \mu_{\chi}(a+p^n\mbb{Z}_p) = \frac{1}{p^n} \sum_{c=0}^{p^n-1} \zeta_{p^n}^{-ca} \left. \frac{\widehat{\xi_{\chi}}(T)}{\widehat{\upsilon_{\chi}}(T)} \right|_{T=\zeta_{p^n}^c}.\]
Moreover, let $E_n$ denote the units of $\mbb{Q}(\zeta_{p^{n+1}})$. The map
$E_n \to \mbb{Z}_p[\Gamma_n]$ defined by
\[ \epsilon \mapsto (\Upsilon_{\chi}^{(n)})^{-1} \cdot \sum_{a=0}^{p^n-1} \log_p\big( \epsilon^{(p-1)e_{\chi}\gamma_0^a} \big) \gamma_0^{-a} \]
is a $\Gal(\mbb{Q}(\zeta_{p^{n+1}})/\mbb{Q})$-module map where the invertibility of $\Upsilon_{\chi}^{(n)}$ follows from the non-vanishing of $\widehat{\upsilon_{\chi}}(\zeta_{\psi})$. Note that $(\Upsilon_{\chi}^{(n)})^{-1}$ is acting as an \emph{integralizer} in the sense of \cite[Definition 2.5]{A}. The image of the cyclotomic units of $\mbb{Q}(\zeta_{p^{n+1}})$ under this map annihilates the $\chi$-part of the Sylow $p$-subgroup of $\Cl \big( \mbb{Q} (\zeta_{p^{n+1}}^+) \big)$ \cite[Theorem 3.1]{A}. Let $M_{\chi}^{(n)} \in \mbb{Z}_p[\Gamma_n]$ denote the group ring element:
\[ M_{\chi}^{(n)} =\sum_{a=0}^{p^n-1} \mu_{\chi}(a+p^n\mbb{Z}_p) \gamma_0^{-a} \in \mbb{Z}_p[ \Gamma_n].\]
Likewise, let $\Upsilon_{\chi}^{(n)}$ and $\Xi_{\chi}^{(n)}$ be the group ring elements in $\mbb{Q}_p(\zeta_{p^{n+1}})[\Gamma_n]$ corresponding to the distributions $\upsilon_{\chi}$ and $\xi_{\chi}$, respectively. Since
\[ M_{\chi}^{(n)} = (\Upsilon_{\chi}^{(n)})^{-1} \cdot \Xi_{\chi}^{(n)} \]
we see that $M_{\chi}^{(n)}$ is indeed in the image of the cyclotomic units of the aforementioned map. Therefore $(M_{\chi}^{(n)}) \in \ZP \llbracket \Gamma \rrbracket$ is a coherent sequence of explicit annihilators of the $\chi$-part of the Sylow $p$-subgroup of $\Cl\big( \mbb{Q}(\zeta_{p^{n+1}}^+) \big)$.
\end{remark}

\bibliographystyle{plain}
\bibliography{mybib}

\end{document}